\documentclass{birkjour}
\usepackage{amsmath,amsthm,amssymb}

\newtheorem{theorem}{Theorem}[section]
 \newtheorem{corollary}{Corollary}[section]

\newtheorem{lemma}{Lemma}[section]

\DeclareMathOperator{\RM}{Re}
\allowdisplaybreaks
\begin{document}

\title[On the Bessel function]{Inclusion of generalized Bessel functions in the Janowski class}
\author[S. R. Mondal]{Saiful R. Mondal}
\address{ Department of Mathematics and Statistics, College of Science,
King Faisal University, Al-Hasa 31982, Saudi Arabia }
\email{smondal@kfu.edu.sa}
\author[Al Dhuain Mohammed]{Al Dhuain Mohammed}
\address{ Department of Mathematics and Statistics, College of Science,
King Faisal University, Al-Hasa 31982, Saudi Arabia}
\email{albhishi1900@gmail.com}

\begin{abstract}
Sufficient conditions on $A$, $B$, $p$, $b$ and $c$ are determined that will ensure the generalized Bessel functions  ${u}_{p,b,c}$ satisfies the subordination ${u}_{p,b,c}(z) \prec (1+Az)/ (1+Bz)$. In particular this gives  conditions for $(-4\kappa/c)({u}_{p,b,c}(z)-1)$, $c \neq 0$ to be close-to-convex. Also, conditions for which  ${u}_{p,b,c}(z)$ to be Janowski convex, and $z{u}_{p,b,c}(z)$ to be Janowski starlike in the unit disk $\mathbb{D}=\{z \in \mathbb{C}: |z|<1\}$ are obtained.
\end{abstract}
\maketitle

\section{Introduction}
Let $\mathcal{A}$ denote the class of analytic functions $f$ defined in the open unit disk $\mathbb{D}=\{z: |z|<1\}$ normalized by the conditions
 $f(0) = 0 = f'(0)-1$. If $f$ and $g$ are  analytic
in $\mathbb{D}$, then $f$ is subordinate to $g$, written  $f(z) \prec g(z)$,  if there is an analytic self-map
$w$ of $\mathbb{D}$ satisfying $w(0)=0$  and $f = g \circ w$.
For $-1 \leq B < A \leq  1$, let $\mathcal{P}[A,B]$ be the class consisting of normalized analytic functions $p(z)= 1+ c_{1}z + \cdots$ in $\mathbb{D}$ satisfying
\[ p(z) \prec \frac{1+Az}{1+Bz}.\]
For instance, if $0 \leq \beta <1$, then $ \mathcal{P}[1-2 \beta, -1]$ is the class of functions $p(z)= 1+ c_{1}z + \cdots$ satisfying $ \RM p(z) > \beta$ in $\mathbb{D}$.

The class $\mathcal{S}^\ast[A, B]$ of Janowski starlike functions \cite{Janowski} consists of $ f \in \mathcal{A}$ satisfying
\[\frac{ z f'(z)}{f(z)}\in \mathcal{P}[A,B].\]
For $0 \leq \beta <1$, $ \mathcal{S}^\ast[1-2 \beta, -1]:=  \mathcal{S}^\ast(\beta)$ is the usual class of starlike functions of order $\beta$;
$\mathcal{S}^\ast[1- \beta, 0]:=  \mathcal{S}^\ast_{\beta} = \{f \in \mathcal{A} :
| z f'(z)/f(z) -1| < 1- \beta \}$, and $\mathcal{S}^\ast[\beta, - \beta]:=\mathcal{S}^\ast[\beta]= \{f \in \mathcal{A} :
| z f'(z)/f(z) -1| < \beta | z f'(z)/f(z) +1|\}$. These classes have been studied, for example, in
\cite{Ali-seeni-ijmms, Ali-Chandra-aml}. A function $f \in \mathcal{A}$ is said to be close-to-convex of order
$\beta$ \cite{Goodman-book, Miller-Mocanu-book} if $\RM\left( zf'(z)/{g(z)}\right)> \beta$ for some
 $g \in \mathcal{S}^\ast :=\mathcal{S}^\ast(0)$.

This article studies the generalized  Beesel function $u_p(z)={u}_{p,b,c}(z)$ given by the power series
\begin{align}\label{eqn:conf-hyp}
u_p(z) = {}_0F_{1}(\kappa,\tfrac{-c}{4}z) =  \sum_{k=0}^\infty \tfrac{(-1)^k c^k}{4^k(\kappa)_k}\tfrac{z^k}{k!},
\end{align}
where $ \kappa=p+(b+1)/2\neq 0,-1,-2,-3 \cdots$. The function $u_p(z)$ is analytic in $\mathbb{D}$ and solution of the differential equation
\begin{align}\label{eqn:kumar-hypr-ode}
4z^2 u''(z)+4\kappa zu'(z)+c z u(z)=0,
\end{align}
if  $b$,$p$,$c$ in $\mathbb{C}$ ,such that $\kappa =p+(b+1)/2 \neq 0,-1,-2,-3 \cdots$ and   $z\in\mathbb{D}$. This normalized and generalized Bessel function of the first kind of order $p$, also satisfy the following recurrence relation
\begin{align}\label{eqn:kumar-hypr-recur-1}
4\kappa u'_p(z)=-c u_{p+1} (z),
\end{align}
which is an useful tool to study several geometric properties of $u_p$. There has been several works \cite{Baricz1,Baricz2,Selinger,Szasz-Kupan,Baricz-Pswamy,Baricz-Szasz} studying geometric properties of the function $u_p(z)$, such as on its close-to-convexity, starlikeness, and convexity, radius of starlikeness and convexity.

In Section $\ref{section-3}$  of this paper, sufficient conditions on $A$, $B$, $c$, $\kappa$ are determined  that will ensure $u_p$ satisfies the subordination $u_p(z) \prec (1+Az)/ (1+Bz)$. It is to be understood that a computationally-intensive methodology with shrewd manipulations is required to obtain the results in this general framework. The benefits of such general results are that by judicious choices of the parameters $A$ and $B$, they give rise to several interesting applications, which include extending the results of previous works. Using this subordination result, sufficient conditions are obtained for $(-4\kappa/c) u'(z) \in \mathcal{P}[ A, B]$, which next readily gives conditions for $(-4\kappa/c)( u_p(z)-1)$ to be close-to-convex. Section  $\ref{section-4}$ gives emphasis to the investigation of $u_p(z)$ to be Janowski convex as well as of $ z u_p(z)$ to be Janowski starlike.

The following lemma is needed in the sequel.
\begin{lemma}\cite{miller-dif-sub, Miller-Mocanu-book}\label{lem:miller-mocanu-1}
 Let $\Omega \subset \mathbb{C}$, and $\Psi : \mathbb{C}^2 \times \mathbb{D} \to \mathbb{C}$ satisfy
 \[
 \Psi( i \rho , \sigma; z) \not \in \Omega
 \]
 whenever $z \in \mathbb{D}$, $\rho$ real, $\sigma \leq -(1+\rho^2)/2$. If $p$ is analytic in $\mathbb{D}$ with $p(0)=1$, and
 $\Psi( p(z), z p'(z); z) \in \Omega$ for $ z \in \mathbb{D}$, then $\RM  p(z) > 0$ in $\mathbb{D}$.
\end{lemma}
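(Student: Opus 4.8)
The plan is to argue by contradiction, reducing the half-plane conclusion $\RM p(z) > 0$ to a boundary estimate supplied by the Jack--Clunie lemma. Suppose $\RM p(z) > 0$ fails somewhere in $\mathbb{D}$. Since $p(0)=1$ has positive real part and $p$ is continuous, the open set $U = \{z \in \mathbb{D} : \RM p(z) > 0\}$ contains a neighborhood of the origin but is a proper subset of $\mathbb{D}$. First I would produce a ``first touching point'': setting $r_0 = \sup\{r : \overline{D(0,r)} \subset U\}$, continuity together with the openness of $U$ forces $0 < r_0 < 1$ and the existence of a point $z_0$ with $|z_0| = r_0$, $\RM p(z_0)=0$, while $\RM p(z) > 0$ for $|z| < r_0$. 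Thus $p(z_0) = i\rho$ for some real $\rho$.

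The key device is the Cayley transform $w(z) = (1 - p(z))/(1 + p(z))$, which carries the right half-plane onto $\mathbb{D}$; equivalently $p = (1-w)/(1+w)$. Because $\RM p \geq 0$ on $\overline{D(0,r_0)}$ we have $p \neq -1$ there, so $w$ is analytic on this closed disk, satisfies $w(0)=0$ and $|w(z)| < 1$ for $|z| < r_0$, while $|w(z_0)| = 1$. Hence $|w|$ attains its maximum over $\overline{D(0,r_0)}$ at the boundary point $z_0$, and the Jack--Clunie lemma yields $z_0 w'(z_0) = m\, w(z_0)$ with $m \geq 1$.

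It remains to translate this into information about $z_0 p'(z_0)$. Differentiating $p = (1-w)/(1+w)$ gives $z p'(z) = -2 z w'(z)/(1+w(z))^2$. Substituting $w(z_0) = (1 - i\rho)/(1 + i\rho)$ and $z_0 w'(z_0) = m\, w(z_0)$, a short computation collapses the $(1+i\rho)$ factors and produces $z_0 p'(z_0) = -m(1 + \rho^2)/2$. Writing $\sigma := z_0 p'(z_0)$, this is a real number with $\sigma \leq -(1 + \rho^2)/2$, since $m \geq 1$. Therefore the admissibility hypothesis applies at $(i\rho,\sigma;z_0)$ and gives $\Psi(i\rho, \sigma; z_0) \notin \Omega$. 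But the standing assumption on $p$ says $\Psi(p(z_0), z_0 p'(z_0); z_0) = \Psi(i\rho, \sigma; z_0) \in \Omega$, a contradiction. Hence no such $z_0$ exists and $\RM p(z) > 0$ throughout $\mathbb{D}$.

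I expect the main obstacle to be the rigorous construction of the first touching point $z_0$ together with the justification that $w$ extends analytically to $\overline{D(0,r_0)}$, so that the Jack--Clunie lemma is genuinely applicable; the algebraic reduction to $\sigma = -m(1+\rho^2)/2$ is routine once $p$ is expressed through $w$. An alternative that sidesteps some of the boundary bookkeeping is to run the same argument on the dilated functions $p_r(z) = p(rz)$ for $r < 1$, invoke the maximum-modulus principle on each fixed circle, and then let $r \to 1^{-}$.
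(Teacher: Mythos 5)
The paper offers no proof of this lemma at all—it is quoted verbatim from the cited Miller--Mocanu sources—so the only meaningful comparison is with the standard argument there, and your proof is precisely that argument: a first-touching-point $z_0$ with $p(z_0)=i\rho$ and $\RM p>0$ on $|z|<|z_0|$, the Cayley transform $w=(1-p)/(1+p)$, the Jack--Clunie lemma giving $z_0w'(z_0)=m\,w(z_0)$ with $m\ge 1$, and the computation $z_0p'(z_0)=-\tfrac{m}{2}(1+\rho^2)\le -\tfrac12(1+\rho^2)$, which contradicts the admissibility hypothesis at $(i\rho,\sigma;z_0)$. Your construction of $z_0$ (compactness plus continuity), the analyticity of $w$ near $\overline{D(0,r_0)}$ (since $p\neq -1$ there), and the algebra collapsing $(1+i\rho)$ are all correct, so the proposal is a sound reconstruction of the proof the paper delegates to the literature.
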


In the case $\Psi : \mathbb{C}^3 \times \mathbb{D} \to \mathbb{C}$, then the condition in Lemma $\ref{lem:miller-mocanu-1}$ generalized to
 \[
 \Psi( i \rho , \sigma, \mu + i \nu; z) \not \in \Omega
 \]
 $\rho$ real,  $\sigma+\mu \leq 0$ and $ \sigma \leq -(1+\rho^2)/2$.

\section{Close-to-convexity of the Bessel function}\label{section-3}
In this section,  one main result on the close-to-convexity of the generalized Bessel function with several consequences are discussed in details.

\begin{theorem}\label{thm:-janw-cc}
Let $-1 \leq B \leq 3- 2 \sqrt{2} \approx 0.171573$. Suppose
 $B < A \leq 1$, and $c$, $\kappa  \in \mathbb{R}$ satisfy
 \begin{align}\label{eqn:thm-janw-cc-0}
\kappa-1 \geq \max \left\{0,  \tfrac{(1+B)(1+A)}{4(A-B)}\left|c\right|\right\}.
\end{align}
Further let $A$, $B$, $\kappa$ and $c$ satisfy either the inequality
\begin{align}\label{eqn:thm-janw-cc-2}
(\kappa-1)^2+ \tfrac{(\kappa-1)(1+B)}{(1-B)}- \left| \tfrac{(\kappa-1)(A+B) }{2(A-B)} c  + \tfrac{(1+B)^2(1+A) }{4(1-B)(A-B)} c\right|  \geq  \tfrac{(1-A^2) (1-B^2)}{16(A-B)^2} c^2
\end{align}
whenever
\begin{align}\label{eqn:thm-janw-cc-1}\left| 2 (\kappa-1)(1-B)(A+B) c+(1+B)^2(1+A) c \right|
 \geq \tfrac{1}{2}(A-B)(1-B) c^2,
\end{align}
or the inequality
\begin{align}\label{eqn:thm-janw-cc-3}\nonumber
&\left((\kappa-1)  \tfrac{(A+B)}{2(A-B)}c + \tfrac{(1+B)^2((1+A)}{4(1-B)(A-B)}c \right)^2
\\
&\quad \quad \leq \tfrac{c^2}{4} \left((\kappa-1)^2 +(\kappa-1)\tfrac{(1+B)}{1-B}  -\tfrac{(1-AB)^2}{16(A-B)^2}c^2 \right)
\end{align}
whenever
\begin{align}\label{eqn:thm-janw-cc-4}\left| 2 (\kappa-1)(1-B)(A+B)c + (1+B)^2(1+A) c \right|< \tfrac{1}{2}(A-B)(1-B) c^2.
\end{align}
If $(1+B)u_p (z) \neq (1+A)$,  then $u_p (z) \in \mathcal{P}[A,B]$.
\end{theorem}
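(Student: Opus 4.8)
The plan is to convert the asserted subordination into a positive–real–part problem and then apply the three–variable form of Lemma \ref{lem:miller-mocanu-1}. Since $u_p(0)=1$ and the hypothesis $(1+B)u_p(z)\neq 1+A$ guarantees analyticity, I would set
$$ P(z)=\frac{(1-A)-(1-B)u_p(z)}{(1+B)u_p(z)-(1+A)}, \qquad P(0)=1, $$
whose inverse relation is $u_p=\dfrac{(1+A)P+(1-A)}{(1+B)P+(1-B)}$. If $\RM P>0$ in $\mathbb{D}$, then $w=(P-1)/(P+1)$ is a Schwarz function and $u_p=(1+Aw)/(1+Bw)\prec(1+Az)/(1+Bz)$; hence it suffices to show $\RM P>0$.

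Writing $D=(1+B)P+(1-B)$, one differentiation gives the convenient identity $u_p'=2(A-B)P'/D^2$, and a further differentiation expresses $z^2u_p''$ through $P$, $zP'$ and $z^2P''$. Substituting $u_p,u_p',u_p''$ into \eqref{eqn:kumar-hypr-ode} and clearing the factor $D^3$ yields a fixed polynomial $\Psi\colon\mathbb{C}^3\times\mathbb{D}\to\mathbb{C}$ with $\Psi\big(P,zP',z^2P'';z\big)=0$. Explicitly, $\Psi(r,s,t;z)=8(A-B)\big[t\,D(r)-2(1+B)s^2\big]+8\kappa(A-B)s\,D(r)+cz\,\big((1+A)r+(1-A)\big)D(r)^2$, where $D(r)=(1+B)r+(1-B)$. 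Taking $\Omega=\{0\}$, the inclusion hypothesis $\Psi(P,zP',z^2P'';z)\in\Omega$ of the lemma holds automatically, so the theorem follows once the admissibility condition is verified.

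The core of the argument is therefore to show that $\Psi(i\rho,\sigma,\mu+i\nu;z)\neq0$ whenever $\rho,\nu$ are real, $\sigma\le-(1+\rho^2)/2$, $\sigma+\mu\le0$, and $z\in\mathbb{D}$. Substituting $r=i\rho$ makes $D(i\rho)=(1-B)+i(1+B)\rho$, and $\Psi$ splits as a part free of $z$ plus the single term $cz\,((1+A)i\rho+(1-A))\,D(i\rho)^2$. By the triangle inequality and $|z|<1$, it suffices that the modulus of the $z$–free part dominates $|c|\,|(1+A)i\rho+(1-A)|\,|D(i\rho)|^2$. Here the constraints do the work: $\sigma+\mu\le0$ gives $\kappa\sigma+\mu\le(\kappa-1)\sigma$, and $\sigma\le-(1+\rho^2)/2$ together with $\kappa-1\ge0$ from \eqref{eqn:thm-janw-cc-0} pushes this real quantity below $-(\kappa-1)(1+\rho^2)/2$; minimizing over the free parameter $\nu$ amounts to projecting the remaining terms onto the direction of $D(i\rho)$, and the worst admissible $\sigma$ is the boundary value $\sigma=-(1+\rho^2)/2$ (larger $|\sigma|$ only enlarges the gap through the $\sigma^2$ term). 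After this reduction the required domination becomes a single inequality in the real variable $\rho$, and the restriction $-1\le B\le 3-2\sqrt2$ is exactly what renders the governing quadratic form non-negative (note that $3-2\sqrt2$ is a root of $B^2-6B+1$).

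The main obstacle is the case distinction produced by this minimization. Whether the infimum over $\rho$ is attained at $\rho=0$ or at an interior point is governed by the size of $\big|2(\kappa-1)(1-B)(A+B)c+(1+B)^2(1+A)c\big|$ relative to $\tfrac12(A-B)(1-B)c^2$. When \eqref{eqn:thm-janw-cc-1} holds the infimum is attained at the endpoint and nonvanishing of $\Psi$ reduces precisely to \eqref{eqn:thm-janw-cc-2}; when \eqref{eqn:thm-janw-cc-4} holds it is interior and, being a discriminant condition, the requirement becomes \eqref{eqn:thm-janw-cc-3}. Carrying out the two completions of the square while keeping correct track of the signs of $c$, $A-B$ and $1\pm B$ — so that the absolute values and the factor $|c|$ in \eqref{eqn:thm-janw-cc-0} enter with the right orientation — is the delicate, computationally heavy part. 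Once $\Psi(i\rho,\sigma,\mu+i\nu;z)\neq0$ is established in both regimes, Lemma \ref{lem:miller-mocanu-1} yields $\RM P>0$, that is, $u_p\in\mathcal{P}[A,B]$.
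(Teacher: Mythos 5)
Your setup is identical to the paper's: the same M\"obius transform of $u_p$ (your $P$ is the paper's $p$), the same substitution into \eqref{eqn:kumar-hypr-ode} to get $\Psi(P,zP',z^2P'';z)=0$ (your $\Psi$ is the paper's multiplied through by $8(A-B)D(r)$, which is harmless since $D(i\rho)=(1-B)+i(1+B)\rho\neq0$ for $B<1$), and the same invocation of Lemma \ref{lem:miller-mocanu-1} with $\Omega=\{0\}$. Even your admissibility criterion is sound: minimizing the modulus of the $z$-free part over $\nu$ gives the projection $|\RM\bigl(V\overline{D(i\rho)}\bigr)|/|D(i\rho)|$, and since $\RM\bigl(V\overline{D}\bigr)=8(A-B)\bigl[(\mu+\kappa\sigma)|D|^2-2(1-B^2)\sigma^2\bigr]$, your domination condition is equivalent to the paper's criterion $\RM\Psi(i\rho,\sigma,\mu+i\nu;z)<0$. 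Your identification of why $B\le 3-2\sqrt2$ is needed (it is $2(1-B)^2\ge(1+B)^2$, i.e.\ $B^2-6B+1\ge0$) is also correct.

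The genuine gap is in the core reduction, which you defer and, as sketched, would not go through. By replacing the $z$-term by its modulus $|c|\,|(1-A)+(1+A)i\rho|\,|D(i\rho)|^2$ at the outset, you eliminate $z$ completely, leaving one inequality in $\rho$ alone whose right-hand side contains $\sqrt{(1-A)^2+(1+A)^2\rho^2}\,\bigl((1-B)^2+(1+B)^2\rho^2\bigr)^{3/2}$; after squaring this is a degree-eight polynomial inequality in $\rho$, not something resolved by ``two completions of the square,'' and there is no visible way to match hypotheses \eqref{eqn:thm-janw-cc-1}--\eqref{eqn:thm-janw-cc-4} to it. Moreover, your claim that the dichotomy \eqref{eqn:thm-janw-cc-1} versus \eqref{eqn:thm-janw-cc-4} governs ``whether the infimum over $\rho$ is attained at $\rho=0$ or at an interior point'' cannot be right: $\rho$ ranges over all of $\mathbb{R}$, so endpoint-versus-interior is meaningless there. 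In the paper the $\rho$-variable is disposed of by a discriminant condition $q_1^2\le 4p_1r_1$ for a concave quadratic $Q(\rho)$; the case split lives on the variable $x=\RM z\in(-1,1)$, which you must therefore retain (bounding only $y^2<1-x^2$): one is reduced to $R(x)=mx^2+nx+r\ge0$ on $|x|<1$ with $m=c^2/16$, and \eqref{eqn:thm-janw-cc-1} says precisely $|n|\ge2m$ (vertex outside $[-1,1]$, minimum controlled by $m+r-|n|$, which is nonnegative by \eqref{eqn:thm-janw-cc-2}), while \eqref{eqn:thm-janw-cc-4} says $|n|<2m$ (interior vertex, minimum $(4mr-n^2)/(4m)$, nonnegative by \eqref{eqn:thm-janw-cc-3}). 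Since your pipeline destroys the variable $x$ on which this case analysis is built, the asserted ``precise'' reduction to \eqref{eqn:thm-janw-cc-2} and \eqref{eqn:thm-janw-cc-3} is unsubstantiated; you need to keep $z=x+iy$ explicit until after the $\rho$-discriminant step.
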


\begin{proof}
Define the analytic function $p : \mathbb{D} \to  \mathbb{C}$ by
\begin{align*}
p(z) = - \tfrac{ (1-A) - (1-B) u_p(z)}{ (1+A) - (1+B) u_p(z)},
\end{align*}
 Then, a computation yields
\begin{align}\label{eqn-thm-1-1}
 u_p(z)& = \tfrac{(1-A) + (1+A) p(z)}{(1-B) + (1+B) p(z)},\\ \label{eqn-thm-1-2}
u_p(z)& = \tfrac{ 2 (A-B) p'(z)}{((1-B) + (1+B) p(z))^2 },
\end{align}
and
\begin{align}\label{eqn-thm-1-3}
 u_p''(z) =\tfrac{2 (A-B)( (1-B) + (1+B) p(z) ) p''(z)
- 4 (1+B) (A-B) {p'}^2(z)}{( (1-B) + (1+B) p(z) )^3}.
\end{align}
Thus, using the identities $(\ref{eqn-thm-1-1})$--$(\ref{eqn-thm-1-3})$, the Bessel differential equation
$(\ref{eqn:kumar-hypr-ode})$ can be rewrite as
\begin{align}\label{eqn:thm-1-ode} \nonumber
&z^2 p''(z) - \tfrac{ 2( 1+B)}{(1-B)+ (1+B)p(z)}  (zp'(z))^2 + \kappa zp'(z)\\
& \quad \quad \quad \quad + \tfrac{((1-B) + (1+B) p(z) ) ((1-A)+ (1+A)p(z))}{ 8 (A-B)} cz = 0.
\end{align}

Assume $\Omega = \{0\}$, and  define $\Psi(r, s, t;z)$ by
\begin{align}\label{eqn:thm-1-ode-2}
\Psi(r, s, t;z) := t &- \tfrac{ 2( 1+B)}{(1-B)+ (1+B)r} s^2 + \kappa s +
\tfrac{((1-B) + (1+B) r ) ((1-A)+ (1+A)r )}{ 8 (A-B)} cz.
\end{align}
It follows from  $(\ref{eqn:thm-1-ode})$ that
$
\Psi(p(z), z p'(z), z^2 p''(z); z)  \in \Omega.$  To ensure $\RM p(z) >0$ for $z \in \mathbb{D}$,
from Lemma $\ref{lem:miller-mocanu-1}$, it is enough to establish
$
\RM \Psi( i\rho, \sigma, \mu+ i \nu; z) < 0
$
in $\mathbb{D}$  for any real $\rho$, $\sigma \leq -(1+\rho^2)/2$, and $\sigma+\mu \leq 0$.

With $z = x+ iy \in \mathbb{D}$ in $(\ref{eqn:thm-1-ode-2})$, a computation yields
\begin{align}\label{eqn:thm-1-re-psi} \nonumber
\RM \Psi( i\rho, \sigma, \mu+ i \nu; z)
& = \mu - \tfrac{ 2( 1-B^2)}{(1-B)^2+ (1+B)^2 \rho^2} \sigma^2 + \kappa \sigma
 - \tfrac{\rho (1-AB)}{4(A-B)} cy\\
 &\quad \quad + \tfrac{(1-B) (1-A) - (1+B) (1+A)\rho^2}{8(A-B)} cx.
\end{align}
Since $\sigma \leq -(1+\rho^2)/2$, and $B \in [-1, 3-2\sqrt{2}]$,
\begin{align*}
\tfrac{ 2( 1-B^2)}{(1-B)^2+ (1+B)^2 \rho^2} \sigma^2
 &\geq \tfrac{ 2( 1-B^2)}{(1-B)^2+ (1+B)^2 \rho^2} \tfrac{(1+\rho^2)^2}{4}
 \geq \dfrac{1+B}{2(1-B)}.
\end{align*}
 Thus
\begin{align*}
\RM \Psi( i\rho, \sigma, \mu+ i \nu; z)&\leq (\kappa -1)\sigma - \tfrac{1+B}{2(1-B)} - \tfrac{\rho(1-AB)}{4(A-B)} cy\\
&\hspace{.5in}+\tfrac{(1-B)(1-A)-(1+B)(1+A)\rho^2}{8(A-B)}cx\\
&\leq -\tfrac{1}{2}(\kappa-1) (1+\rho^2) -\tfrac{1+B}{2(1-B)}-\tfrac{\rho(1-AB)}{4(A-B)}cy\\
&\hspace{.5in}+\tfrac{(1-B)(1-A)-(1+B)(1+A)\rho^2}{8(A-B)}cx\\
&=p_1 \rho^2+  q_1 \rho + r_1 := Q(\rho),
\end{align*}
where
\begin{align*}
p_1&= -\tfrac{1}{2}(\kappa-1)-\tfrac{(1+B) (1+A)  cx}{8(A-B)}, \hspace{2.5in} \\
q_1&=-\tfrac{1-AB}{4(A-B)}cy,\\
r_1 &= - \tfrac{1}{2} (\kappa-1)  +\tfrac{(1-B)(1-A)}{8(A-B)}cx-\tfrac{1+B}{2(1-B)}.
\end{align*}

Condition $(\ref{eqn:thm-janw-cc-0})$ shows that
\begin{align*}
p_1 &= -\tfrac{1}{2}(\kappa-1)-\tfrac{(1+B) (1+A)  cx}{8(A-B)}\\
 &< -\tfrac{1}{2}\bigg( (\kappa-1)-\tfrac{(1+B)(1+A)}{4(A-B)}\left|c\right|\bigg) <0.
\end{align*}
 Since $\displaystyle{ \max_{\rho \in \mathbb{R}} \{ p_1 \rho^2+ q_1\rho + r_1\}=(4 p_1 r_1 - q_1^2)/(4 p_1)}$ for $p_1 < 0$, it is clear that $Q(\rho) <0$ when
 \begin{align*}
\tfrac{(1-AB)^2}{16(A-B)^2} c^2y^2 &<  4\left(-\tfrac{1}{2}(\kappa-1)-\tfrac{(1+B)(1+A)}{8(A-B)}cx\right)\times\\ & \quad \quad \quad \quad \left(-\tfrac{1}{2}(\kappa-1)+\tfrac{(1-B)(1-A)}{8(A-B)}cx
-\tfrac{1+B}{2(1-B)}\right),
 \end{align*}
 with $|x|, |y| < 1$.
 As $y^2 < 1- x^2$, the above condition holds whenever
 \begin{align*}
&\tfrac{(1-AB)^2 c^2}{16(A-B)^2} (1-x^2)\\& \leq  \left( (\kappa-1)+\tfrac{(1+B) (1+A)  cx}{4(A-B)}\right) \left((\kappa-1)-\tfrac{(1-B) (1-A)}{4(A-B)} cx + \tfrac{1+B}{1-B}\right),
 \end{align*}
 that is, when
\begin{align}\label{eqn:thm-1-x}\nonumber
&\tfrac{c^2}{16}x^2 + \left((\kappa-1)\tfrac{(A+B)}{2(A-B)}c +\tfrac{(1+B)^2(1+A)}{4(1-B)(A-B)}c\right)x
  \\& \quad \quad + (\kappa-1)^2 + (\kappa-1)\tfrac{1+B}{1-B}- \tfrac{(1-AB)^2}{16(A-B)^2}c^2 \geq 0.
\end{align}
To establish inequality $(\ref{eqn:thm-1-x})$, consider the polynomial $R$ given by
\begin{align*}\label{eqn:thm-1-R(x)}
R(x) := m x^2 + n x+ r,\quad |x| <1,
\end{align*}
where
\begin{align*}
m &:=\tfrac{c^2}{16},\quad \quad \quad
n := (\kappa-1)\tfrac{(A+B)}{2(A-B)} c +\tfrac{(1+B)^2(1+A)}{4(1-B)(A-B)} c \\
r &:= (\kappa-1)^2 + (\kappa-1)\tfrac{1+B}{1-B}- \tfrac{(1-AB)^2}{16(A-B)^2}c^2.
\end{align*}
The constraint $(\ref{eqn:thm-janw-cc-1})$ yields $|n| \geq 2|m|$, and thus $R(x) \geq m +  r - |n|$. Now inequality
 $(\ref{eqn:thm-janw-cc-2})$ readily implies that
\begin{align*}
 R(x)&\geq  m +  r - |n|\\
& = \tfrac{c^2}{16}+(\kappa-1)^2 + (\kappa-1)\tfrac{1+B}{1-B}- \tfrac{(1-AB)^2}{16(A-B)^2}c^2 \\
& \quad \quad -\left|(\kappa-1)\tfrac{(A+B)}{2(A-B)}c +\tfrac{(1+B)^2(1+A)}{4(1-B)(A-B)}c\right| \\
&= (\kappa-1)^2+(\kappa-1)\tfrac{(1+B)}{1-B}- \left|(\kappa-1)\tfrac{(A+B)}{2(A-B)}c +\tfrac{(1+B)^2(1+A)}{4(1-B)(A-B)}c\right|\\
& \quad \quad-\tfrac{(1-A^2)(1-B^2)}{16(A-B)^2}c^2\\
& \geq 0.
\end{align*}
Now considers the case  of the constraint $(\ref{eqn:thm-janw-cc-4})$, which is equivalent to $|n| < 2m$. Then the minimum of $R$ occurs at $x = - n/(2m)$, and
$(\ref{eqn:thm-janw-cc-3})$ yields
\begin{align*}
R(x) \geq  \tfrac{4mr-n^2}{4m} \geq 0.
\end{align*}

Evidently  $\Psi$ satisfies the hypothesis of Lemma $\ref{lem:miller-mocanu-1}$, and thus $\RM\; p(z) > 0$, that is,
\[
- \tfrac{ (1-A) - (1-B) u_p(z)}{ (1+A) - (1+B) u_p(z)} \prec \tfrac{1+z}{1-z}.
\]
Hence there exists an analytic self-map  $w$ of $\mathbb{D}$ with $w(0)=0$ such that
\[
- \tfrac{ (1-A) - (1-B) u_p(z)}{ (1+A) - (1+B) u_p(z)} = \tfrac{1+w(z)}{1-w(z)},
\]
which implies that $u_p(z) \prec (1+ A z)/(1+B z).$
\end{proof}

Theorem $\ref{thm:-janw-cc}$  gives rise to simple conditions on $c$ and $\kappa$ to ensure $u_p(z)$ maps $\mathbb{D}$ into a half-plane.

\begin{corollary}\label{cor:-janw-cc-1}
Let $c \leq 0$ and $2\kappa \geq  2+c^2$. Then $\RM u_p(z) > c/(c-1).$
\end{corollary}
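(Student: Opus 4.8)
The plan is to realise the half-plane $\RM u_p(z) > c/(c-1)$ as membership in a Janowski class and then invoke Theorem~\ref{thm:-janw-cc}. Since the paper records that $\mathcal{P}[1-2\beta,-1]$ is precisely the class of normalized analytic functions with $\RM p(z)>\beta$, and since $c/(c-1)=-c/(1-c)\in[0,1)$ for $c\le 0$, I would set
\begin{align*}
B=-1,\qquad A=1-2\,\frac{c}{c-1}=\frac{1+c}{1-c},
\end{align*}
so that the half-plane $\{w:\RM w>c/(c-1)\}$ is exactly the image $\{(1+Az)/(1-z):z\in\mathbb{D}\}$. The opening move is therefore to check that this pair $(A,B)$ is admissible for Theorem~\ref{thm:-janw-cc}: $B=-1$ is the left endpoint of $[-1,3-2\sqrt2]$; the requirement $B<A\le1$ reduces, using $c\le0$ and $1-c>0$, to the elementary facts $A>-1$ (always true) and $A\le1\iff c\le0$; and the proviso $(1+B)u_p(z)\neq(1+A)$ holds automatically because $1+B=0$ while $1+A\neq0$.

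The main computation is to show that the hypotheses (\ref{eqn:thm-janw-cc-0})--(\ref{eqn:thm-janw-cc-4}) collapse, under $B=-1$, to the single arithmetic condition $2\kappa\ge 2+c^2$. The choice $B=-1$ is what makes this tractable, since every term carrying a factor $1+B$ or $1-B^2$ vanishes. I would first record the reductions
\begin{align*}
&A-B=\frac{2}{1-c},\qquad A+B=\frac{2c}{1-c},\qquad 1-AB=\frac{2}{1-c},\\
&\frac{A+B}{A-B}=c,\qquad \frac{(1-AB)^2}{16(A-B)^2}=\frac{1}{16}.
\end{align*}
With these, the side condition (\ref{eqn:thm-janw-cc-0}) becomes simply $\kappa\ge1$, which is immediate from $2\kappa\ge2+c^2$.

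Next I would carry out the case split that the theorem demands. A direct substitution shows that the dichotomy (\ref{eqn:thm-janw-cc-1}) versus (\ref{eqn:thm-janw-cc-4}) reduces to $\kappa-1\ge\tfrac14$ versus $\kappa-1<\tfrac14$. In the regime $\kappa-1\ge\tfrac14$ the inequality (\ref{eqn:thm-janw-cc-2}) collapses to $(\kappa-1)\big((\kappa-1)-\tfrac{c^2}{2}\big)\ge0$, which, since $\kappa\ge1$, is exactly the hypothesis $2\kappa\ge2+c^2$; this half of the argument is clean. In the complementary regime $\kappa-1<\tfrac14$ the inequality (\ref{eqn:thm-janw-cc-3}) reduces to
\begin{align*}
(\kappa-1)^2\,(1-c^2)\ \ge\ \frac{c^2}{16}.
\end{align*}
This is where I expect the genuine difficulty to lie. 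The bound obtained merely from $\kappa-1\ge c^2/2$ gives $(\kappa-1)^2(1-c^2)\ge\tfrac{c^4}{4}(1-c^2)$, and $\tfrac{c^4}{4}(1-c^2)\ge c^2/16$ is equivalent to $(2c^2-1)^2\le0$, so the crude estimate is tight only at $c^2=\tfrac12$. Thus the subordination route forces one to exploit the two constraints $\kappa-1\ge c^2/2$ and $\kappa-1<\tfrac14$ jointly (note the regime itself entails $c^2<\tfrac12$), and this interplay is the crux; I would concentrate the effort here, treating $\kappa-1$ as a parameter on $[\,c^2/2,\ 1/4\,)$ and locating the extremal configuration.

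Finally, once (\ref{eqn:thm-janw-cc-0})--(\ref{eqn:thm-janw-cc-4}) are verified, Theorem~\ref{thm:-janw-cc} yields $u_p(z)\prec(1+Az)/(1-z)$, that is $u_p\in\mathcal{P}[A,-1]$. Reading off the image of the half-plane map gives $\RM u_p(z)>(1-A)/2=c/(c-1)$, which is the assertion.
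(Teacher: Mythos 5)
Your setup is exactly the paper's: since $-(c+1)/(c-1)=(1+c)/(1-c)$, your pair $(A,B)$ coincides with the paper's choice, and your reductions of (\ref{eqn:thm-janw-cc-0}), of (\ref{eqn:thm-janw-cc-2}), and of the dichotomy (\ref{eqn:thm-janw-cc-1}) versus (\ref{eqn:thm-janw-cc-4}) (namely $\kappa-1\ge\tfrac14$ versus $\kappa-1<\tfrac14$ when $c\neq0$) are all correct. In the regime $c=0$ or $\kappa-1\ge\tfrac14$ your argument is complete and agrees with the paper's: (\ref{eqn:thm-janw-cc-2}) collapses to $(\kappa-1)^2\ge\tfrac12(\kappa-1)c^2$, which is precisely the hypothesis $2\kappa\ge2+c^2$.

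However, the proposal has a genuine gap in Case 2, which you flag as ``the crux'' but never close --- and it cannot be closed. For $c\neq0$ and $\kappa-1<\tfrac14$, the theorem demands (\ref{eqn:thm-janw-cc-3}), which you correctly reduce to $(\kappa-1)^2(1-c^2)\ge c^2/16$. The corollary's hypothesis admits $\kappa-1=c^2/2$ for any $0<c^2<\tfrac12$, and at that point the required inequality reads $\tfrac{c^4}{4}(1-c^2)\ge\tfrac{c^2}{16}$, i.e.\ $(2c^2-1)^2\le0$, which is false for $c^2\neq\tfrac12$. Concretely, $c=-1/10$, $\kappa=1.005$ satisfies $2\kappa=2+c^2$ and lands in Case 2, yet $(\kappa-1)^2(1-c^2)=2.475\times10^{-5}<6.25\times10^{-4}=c^2/16$. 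So the ``extremal configuration'' you propose to locate on $[c^2/2,\,1/4)$ is a point of outright failure, not a point where a sharper joint estimate rescues the argument: no interplay of the two constraints can establish (\ref{eqn:thm-janw-cc-3}) there. It is worth knowing that the paper's own proof does not handle this case either: it asserts that (\ref{eqn:thm-janw-cc-1}) is ``equivalent to $\kappa\ge1$'' under this substitution, whereas the correct reduction is $c^2(4\kappa-5)\ge0$, i.e.\ $c=0$ or $\kappa\ge5/4$; the paper thus verifies only Case 1 and silently assumes it always applies. Your more careful bookkeeping exposes this defect: via Theorem \ref{thm:-janw-cc} with this $(A,B)$, the conclusion is established only when $c=0$ or $\kappa\ge5/4$, and the parameter range $1+c^2/2\le\kappa<5/4$ remains unproven both in your proposal and in the paper.
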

\begin{proof}
Choose $A= - (c+1)/(c-1)$, and $B=-1$ in Theorem $\ref{thm:-janw-cc}$. Then both the conditions $(\ref{eqn:thm-janw-cc-0})$ and $(\ref{eqn:thm-janw-cc-1})$ are equivalent  to $\kappa \geq 1$ which clearly holds for $\kappa \geq 1+ c^2/2$.  The proof will complete if the hypothesis $(\ref{eqn:thm-janw-cc-2})$  holds, i.e.,
\begin{align}\label{eqn:cor-1:-janw-cc-2}
(\kappa-1)^2  \geq \tfrac{1}{2}(\kappa-1)c^2.
\end{align}
Since $\kappa \geq 1+c^2/2$, it follows that
 \begin{align*}
(\kappa-1)^2-\tfrac{1}{2}(\kappa-1)c^2=(\kappa-1) \left(\kappa -1-\tfrac{c^2}{2}\right) \geq 0,
\end{align*}
which establishes $(\ref{eqn:cor-1:-janw-cc-2})$.
\end{proof}

\begin{corollary}\label{cor:-janw-cc-3}
Let $c$, $\kappa$  be real such that
\begin{align*}
\kappa \geq \left\{ \begin{array}{lll}
1, &  c \leq 0\\
1+\frac{c}{2}, & c \geq 0.
\end{array}\right.
\end{align*}
Then $ \RM u_p(z) > 1/2$.
\end{corollary}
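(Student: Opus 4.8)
The plan is to read the conclusion $\RM u_p(z) > 1/2$ as membership in the Janowski class $\mathcal{P}[0,-1]$: since $\RM w > 1/2$ is precisely the half-plane cut out by $w \prec 1/(1-z) = (1+0\cdot z)/(1+(-1)z)$, the natural move is to invoke Theorem \ref{thm:-janw-cc} with $A = 0$ and $B = -1$. The admissibility constraints $-1 \le B \le 3-2\sqrt{2}$ and $B < A \le 1$ then hold trivially, so the entire task is to check the structural hypotheses $(\ref{eqn:thm-janw-cc-0})$--$(\ref{eqn:thm-janw-cc-4})$ after substituting $A=0,\ B=-1$. The substitution is pleasant because $1+B=0$ and $1-B^2=0$ annihilate most terms, and I would record the resulting reductions first, as they drive the case split: $(\ref{eqn:thm-janw-cc-0})$ collapses to $\kappa \ge 1$; the dichotomy $(\ref{eqn:thm-janw-cc-1})$ versus $(\ref{eqn:thm-janw-cc-4})$ reduces to comparing $4(\kappa-1)$ with $|c|$; inequality $(\ref{eqn:thm-janw-cc-2})$ reduces to $(\kappa-1)\left((\kappa-1)-\tfrac12|c|\right)\ge 0$; and inequality $(\ref{eqn:thm-janw-cc-3})$ reduces to $0 \le -c^4/64$.

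For the case $c \ge 0$ this is routine. The hypothesis $\kappa \ge 1 + c/2$ gives $4(\kappa-1) \ge 2c \ge c = |c|$, so we land in the $(\ref{eqn:thm-janw-cc-1})$-branch and must verify only $(\ref{eqn:thm-janw-cc-2})$; but $(\kappa-1)(\kappa-1-c/2) \ge 0$ is immediate from $\kappa - 1 \ge c/2 \ge 0$. Thus Theorem \ref{thm:-janw-cc} applies directly and yields $u_p(z) \prec 1/(1-z)$ for $c \ge 0$.

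The main obstacle is the regime $c \le 0$. Here the reduced form of $(\ref{eqn:thm-janw-cc-3})$ is $0 \le -c^4/64$, which fails unless $c=0$, while $(\ref{eqn:thm-janw-cc-2})$ demands $\kappa-1 \ge |c|/2$; so the bare hypothesis $\kappa \ge 1$ does not feed Theorem \ref{thm:-janw-cc} directly. The device I would use instead is the reflection identity $u_{p,b,c}(z) = u_{p,b,-c}(-z)$, read off from the power series $(\ref{eqn:conf-hyp})$: replacing $c$ by $-c$ and $z$ by $-z$ fixes every coefficient, since $(-1)^k(-c)^k(-z)^k = (-1)^k c^k z^k$. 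Because both $\mathbb{D}$ and the half-plane $\RM w > 1/2$ are symmetric under $z \mapsto -z$, the assertion for $c \le 0$ is equivalent to the assertion for $|c| = -c \ge 0$, which is already settled by the previous paragraph. This reduction pins down the effective hypothesis $\kappa \ge 1 + |c|/2$ (equivalently $\kappa \ge 1 - c/2$) for $c \le 0$, so the clean symmetric statement across all real $c$ should read $\kappa \ge 1 + |c|/2$.

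I would flag that the stated bound $\kappa \ge 1$ for $c \le 0$ appears too weak: at $\kappa = 1$, $c = -4$ one has $(\kappa)_k = k!$ and $u_p(z) = \sum_{k\ge 0} z^k/(k!)^2$, whence $u_p(-r) \to J_0(2) \approx 0.224 < 1/2$ as $r \to 1^-$, so $\RM u_p(z) > 1/2$ genuinely fails. The crux of the proof is therefore conceptual rather than computational: the $c \le 0$ case must be routed through the reflection symmetry back to $c \ge 0$, which both supplies the argument and identifies the correct (symmetric) hypothesis $\kappa \ge 1 + |c|/2$.
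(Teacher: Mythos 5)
Your proposal is correct, and for $c \geq 0$ it coincides with the paper's own proof: set $A=0$, $B=-1$ in Theorem \ref{thm:-janw-cc}, note that $(\ref{eqn:thm-janw-cc-0})$ collapses to $\kappa \geq 1$, and verify the branch $(\ref{eqn:thm-janw-cc-1})$--$(\ref{eqn:thm-janw-cc-2})$ from $\kappa-1 \geq c/2$. The divergence is in the case $c \leq 0$, and there you have caught a genuine error in the paper. The paper asserts that $(\ref{eqn:thm-janw-cc-1})$ and $(\ref{eqn:thm-janw-cc-2})$ reduce to $4(\kappa-1)-c \geq 0$ and $(\kappa-1)^2 - \tfrac{1}{2}(\kappa-1)c \geq 0$, which indeed hold trivially when $c \leq 0$ and $\kappa \geq 1$. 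But this reduction silently replaces $|c|$ by $c$: with $A=0$, $B=-1$ the left-hand side of $(\ref{eqn:thm-janw-cc-1})$ is $|{-4(\kappa-1)c}| = 4(\kappa-1)|c|$ and the right-hand side is $c^2$, so the correct reductions are $4(\kappa-1) \geq |c|$ and $(\kappa-1)\bigl(\kappa-1-\tfrac{1}{2}|c|\bigr) \geq 0$, and these do not follow from $\kappa \geq 1$ when $c < 0$. Since, as you note, the alternative branch $(\ref{eqn:thm-janw-cc-3})$ degenerates to $0 \leq -c^4/64$ and is vacuous for $c \neq 0$, Theorem \ref{thm:-janw-cc} with these parameters genuinely demands $\kappa \geq 1+|c|/2$.

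Moreover, your counterexample shows that the statement itself, not merely its proof, fails for $c \leq 0$: with $\kappa=1$, $c=-4$ one has $u_p(z)=\sum_{k \geq 0} z^k/(k!)^2$, so $u_p(-1)=J_0(2) \approx 0.224$, and by continuity $\RM u_p(z) < 1/2$ at points of $\mathbb{D}$ near $z=-1$; hence no argument could rescue the corollary as stated. Your repair is sound: the reflection $u_{p,b,c}(z) = u_{p,b,-c}(-z)$, immediate from the series $(\ref{eqn:conf-hyp})$, reduces the case $c \leq 0$ under the strengthened hypothesis $\kappa \geq 1+|c|/2$ to the settled case $c \geq 0$. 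Equivalently, one may skip the reflection and apply Theorem \ref{thm:-janw-cc} directly for $c<0$, since the corrected reductions above hold exactly when $\kappa \geq 1+|c|/2$. Either way, the corollary should be restated with the symmetric hypothesis $\kappa \geq 1 + |c|/2$.
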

\begin{proof}
Put $A=0$ and $B=-1$ in Theorem $\ref{thm:-janw-cc}$.  The condition
$(\ref{eqn:thm-janw-cc-0})$ reduces to  $\kappa \geq 1$, which holds in all cases.
It is sufficient to establish conditions
 $(\ref{eqn:thm-janw-cc-1})$ and $(\ref{eqn:thm-janw-cc-2})$, or equivalently,
\begin{align}\label{eqn:cor-3:-janw-cc-1}
4(\kappa-1)-c \geq 0,
\end{align} and
\begin{align}\label{eqn:cor-3:-janw-cc-2}
(\kappa-1)^2 - \tfrac{1}{2}(\kappa-1)c \geq 0.
\end{align}

For the case when  $c \leq 0$, both the inequality $(\ref{eqn:cor-3:-janw-cc-1})$ and $(\ref{eqn:cor-3:-janw-cc-2})$ hold
as $\kappa \geq 1$.

Finally it is readily established for $c \geq 0$ and $\kappa -1\geq  c/2$ that
$4(\kappa-1)-c \geq c \geq 0,$ and
$(\kappa-1)^2 - \tfrac{1}{2}(\kappa-1)c \geq(\kappa-1)(\kappa-1-\tfrac{c}{2})\geq 0.$
\end{proof}

It is known that for $b=2$ and $c=\pm 1$, the generalized Bessel functions $ u_{p,2,1}(z)=j_p(z)$ and $ u_{p,2,-1}(z)=i_p(z)$
respectively gives the spherical Bessel and modified spherical Bessel functions. This specific choice of $b$ and $c$,
Corollary $\ref{cor:-janw-cc-3}$ yield $\RM(i_p(z))>1/2$ for $p \geq -1/2$, and $\RM(j_p(z))>1/2$, for $p \geq 0$. Since $i_p'(0)= 1/(4p+6)$ for $p\geq -1/2$, following inequalities can be obtain with the aid of results in \cite{McCarty}.
\begin{corollary} For $p \geq -1/2$, the modified spherical Bessel functions $i_p$ satisfy the following inequalities.
\begin{align}
|i_p(z)| &\leq \frac{ 4p+6+|z|}{2(2p+3)(1-|z|^2)},\\
\RM(i_p(z) &\geq \frac{p+6+|z|}{4p+6+2|z|+2(2p+3)|z|^2},\\
|i_p'(z)| &\leq \frac{2 \RM(i_p(z)-1}{2(1-|z|^2)}\times \frac{|z|^2+4(2p+3)|z|+1}{(2p+3)|z|^2+|z|+(2p+3)}.
\end{align}
\end{corollary}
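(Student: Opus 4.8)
The plan is to specialize the subordination already in hand and then run a Schwarz--Pick argument; this is a self-contained route that reproduces the estimates attributed to \cite{McCarty}. First I would record the input. Taking $b=2$ and $c=-1$ makes $u_{p,2,-1}=i_p$ and $\kappa=p+3/2$, so Corollary \ref{cor:-janw-cc-3} (its case $c\le 0$, which only asks $\kappa\ge 1$) applies exactly when $p\ge -1/2$ and yields $\RM i_p(z)>1/2$, i.e. $i_p(z)\prec 1/(1-z)$. From the series $(\ref{eqn:conf-hyp})$ one has $i_p(0)=1$ and, using $(\ref{eqn:kumar-hypr-recur-1})$, $i_p'(0)=-c/(4\kappa)=1/(4p+6)=:a_1$. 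The subordination then furnishes a Schwarz function $\phi$ (analytic, $\phi(0)=0$, $|\phi|<1$) with $i_p(z)=1/(1-\phi(z))$ and $\phi'(0)=i_p'(0)=a_1$. Writing $\phi(z)=z\psi(z)$ produces a self-map $\psi$ of $\mathbb{D}$ with $\psi(0)=a_1$, and the entire proof rests on the two Schwarz--Pick estimates $|\psi(z)|\le (a_1+r)/(1+a_1 r)=:P$ and $|\psi'(z)|\le (1-|\psi(z)|^2)/(1-r^2)$, where $r=|z|$.

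For the modulus bound I would estimate the denominator from below: $|1-\phi(z)|\ge 1-r|\psi(z)|\ge 1-rP=(1-r^2)/(1+a_1 r)$, whence $|i_p(z)|\le (1+a_1 r)/(1-r^2)$; substituting $a_1=1/(4p+6)$ and clearing denominators gives the first displayed inequality. For the real-part bound I would use that $w\mapsto 1/(1-w)$ carries the disk $\{|w|\le \rho\}$, $\rho=r|\psi(z)|$, onto a disk symmetric about the real axis whose leftmost point is $1/(1+\rho)$; hence $\RM i_p(z)\ge 1/(1+\rho)\ge (1+a_1 r)/(1+2a_1 r+r^2)$ since $\rho\le rP$. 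After the same substitution this is the second inequality (its printed numerator $p+6$ should read $4p+6$, as forced by $\RM i_p(0)=1$ at $r=0$).

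The derivative bound is where the real work lies, and I expect it to be the main obstacle. The key algebraic identity is $2\RM i_p(z)-1=(1-|\phi|^2)/|1-\phi|^2$, obtained by expanding $\RM (1-\phi)^{-1}$; combined with $i_p'=\phi'/(1-\phi)^2$ it gives the clean factorization
\[
|i_p'(z)|=\frac{|\phi'(z)|}{1-|\phi(z)|^2}\,\bigl(2\RM i_p(z)-1\bigr).
\]
It then remains to bound $|\phi'|/(1-|\phi|^2)$. Using $\phi'=\psi+z\psi'$ and the two Schwarz--Pick inequalities with $t:=|\psi(z)|$ one gets $|\phi'|/(1-|\phi|^2)\le g(t)/(1-r^2)$, where $g(t)=(-rt^2+(1-r^2)t+r)/(1-r^2t^2)$. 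A short computation gives $g'(t)=(1-r^2)(1-rt)^2/(1-r^2t^2)^2\ge 0$, so $g$ increases and its maximum on $[0,P]$ is $g(P)$. The last and heaviest step is to evaluate $g(P)$ with $P=(1+2sr)/(2s+r)$, $s:=2p+3$: both numerator and denominator factor through $s(1-r^2)$ and collapse to $g(P)=(r^2+4sr+1)/\bigl(2(sr^2+r+s)\bigr)$. Feeding this back yields the third inequality. Throughout, the only genuinely delicate point is carrying out this factorization correctly; everything else is routine Schwarz--Pick machinery, and the output matches the estimates quoted from \cite{McCarty}.
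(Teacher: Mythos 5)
Your proposal is correct, and it fills in what the paper leaves entirely to a citation: the paper's "proof" of this corollary consists of the preceding remark that $\RM i_p(z)>1/2$ for $p\ge -1/2$ (Corollary 2.2 with $b=2$, $c=-1$, so $\kappa=p+3/2$) together with $i_p'(0)=1/(4p+6)$, after which the three inequalities are quoted from McCarty's theorems on functions with real part greater than $\alpha$ (here $\alpha=1/2$, second coefficient $1/(4p+6)$). You instead re-derive those estimates from scratch: the same two inputs, then the Schwarz function $\phi$ with $i_p=1/(1-\phi)$, the factorization $\phi=z\psi$, and the two Schwarz--Pick inequalities. I checked the computations: $1-rP=(1-r^2)/(1+a_1r)$ gives the first bound, $1+rP=(1+2a_1r+r^2)/(1+a_1r)$ the second, the identity $2\RM i_p-1=(1-|\phi|^2)/|1-\phi|^2$ and the monotonicity $g'(t)=(1-r^2)(1-rt)^2/(1-r^2t^2)^2\ge 0$ are right, and $g(P)$ does collapse to $(r^2+4sr+1)/(2(sr^2+r+s))$ with $s=2p+3$, reproducing the third bound exactly. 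What your route buys, beyond self-containedness, is verification that the statement as printed contains errors: the numerator $p+6+|z|$ in the second inequality must be $4p+6+|z|$ (the printed version is already false at $z=0$ when $-1/2\le p<0$, since $(p+6)/(4p+6)>1=\RM i_p(0)$ there), and the unbalanced parenthesis in the third must read $2\RM(i_p(z))-1$. What the paper's route buys is brevity, at the cost of being uncheckable without consulting McCarty; the underlying machinery in that reference is essentially the argument you wrote out.
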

Next theorem gives the sufficient condition for close-to-convexity when $B \geq 3- 2 \sqrt{2}.$
\begin{theorem}\label{thm:janw-cc-2}
Let $3- 2 \sqrt{2} \leq B < A\leq1 $
  and $c$, $\kappa  \in \mathbb{R}$ satisfy
 \begin{align}\label{eqn:thm-janw-cc-2-0}
\kappa-1 \geq \max \left\{0,  \tfrac{(1+B)(1+A)}{4(A-B)}\left|c\right|\right\}.
\end{align}
Suppose $A$, $B$, $\kappa$ and $c$ satisfy either the inequality
\begin{align}\label{eqn:thm-2-janw-cc-1}\nonumber
&(\kappa-1)^2+ 16(\kappa-1)\tfrac{B(1-B)}{(1+B)^3}- \left| \tfrac{(\kappa-1)(A+B) }{2(A-B)} c  + \tfrac{4B(1-B^2)(1+A) }{(1+B)^3(A-B)} c\right| \\
& \quad \quad \geq  \tfrac{(1-A^2) (1-B^2)}{16(A-B)^2} c^2
\end{align}
whenever
\begin{align}\label{eqn:thm-2-janw-cc-2}
\left|  (\kappa-1)(1+B)^3(A+B) c+8B(1-B^2)(1+A) c \right| \geq \tfrac{c^2}{4}(A-B)(1+B)^3 ,
\end{align}
or the inequality
\begin{align}\label{eqn:thm-2-janw-cc-3}\nonumber
&\left((\kappa-1)  \tfrac{(A+B)}{2(A-B)}c + \tfrac{4B(1-B^2)((1+A)}{(1+B)^3(A-B)}c \right)^2\\
 &\leq \tfrac{c^2}{4} \left((\kappa-1)^2 +16(\kappa-1)\tfrac{B(1-B)}{(1+B)^3}  -\tfrac{(1-AB)^2}{16(A-B)^2}c^2 \right)
\end{align}
whenever
\begin{align}\label{eqn:thm-2-janw-cc-4}
&\left| \left((\kappa-1)(A+B)(1+B)^3+8B(1-B^2)(1+A)\right) c \right| < \tfrac{c^2}{4}(A-B)(1+B)^3 .
\end{align}
If $(1+B)u_p (z) \neq (1+A)$,  then
$u_p (z) \in \mathcal{P}[A,B]$.
\end{theorem}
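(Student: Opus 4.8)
The plan is to follow the architecture of the proof of Theorem \ref{thm:-janw-cc} verbatim up to the point where the range of $B$ first matters, and to replace only the single scalar estimate that distinguishes the two regimes. Concretely, I would introduce the same auxiliary function
\[
p(z) = - \tfrac{(1-A) - (1-B) u_p(z)}{(1+A) - (1+B) u_p(z)},
\]
derive the identities \eqref{eqn-thm-1-1}--\eqref{eqn-thm-1-3}, recast the Bessel equation \eqref{eqn:kumar-hypr-ode} in the form \eqref{eqn:thm-1-ode}, and define $\Psi$ exactly as in \eqref{eqn:thm-1-ode-2}. Taking $\Omega=\{0\}$, the same computation produces the expression \eqref{eqn:thm-1-re-psi} for $\RM\Psi(i\rho,\sigma,\mu+i\nu;z)$, and the goal is again to show this is negative for all admissible $\rho,\sigma,\mu$ so that Lemma \ref{lem:miller-mocanu-1} applies.

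The one genuinely new step is the lower bound for
\[
\tfrac{2(1-B^2)}{(1-B)^2+(1+B)^2\rho^2}\,\sigma^2 \;\geq\; \tfrac{(1-B^2)(1+\rho^2)^2}{2\big[(1-B)^2+(1+B)^2\rho^2\big]} =: g(\rho^2),
\]
where $\sigma \leq -(1+\rho^2)/2$ has been used. In Theorem \ref{thm:-janw-cc} the function $g$ is increasing on $[0,\infty)$ and its infimum $g(0)=(1+B)/(2(1-B))$ is attained at $\rho=0$; this is precisely what fails once $B \geq 3-2\sqrt2$. I would therefore minimize $g(t)$ over $t=\rho^2\geq 0$ directly: its derivative vanishes at
\[
t^\ast = 1 - \tfrac{2(1-B)^2}{(1+B)^2},
\]
and an elementary sign analysis shows $t^\ast$ is a minimizer lying in $[0,\infty)$ exactly when $(1+B)^2 \geq 2(1-B)^2$, i.e. when $B \geq 3-2\sqrt2$. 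Evaluating there, using $1+t^\ast = 8B/(1+B)^2$ and $(1-B)^2+(1+B)^2 t^\ast = 4B$, yields the sharp constant
\[
\min_{t\geq 0} g(t) = g(t^\ast) = \tfrac{8B(1-B)}{(1+B)^3}.
\]

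With this constant in hand, the remainder is a transcription of the earlier argument with $(1+B)/(2(1-B))$ replaced by $8B(1-B)/(1+B)^3$. I would bound $\RM\Psi$ above by a quadratic $Q(\rho)=p_1\rho^2+q_1\rho+r_1$ whose leading and middle coefficients $p_1,q_1$ are unchanged, while
\[
r_1 = -\tfrac{1}{2}(\kappa-1) + \tfrac{(1-B)(1-A)}{8(A-B)}cx - \tfrac{8B(1-B)}{(1+B)^3}.
\]
Hypothesis \eqref{eqn:thm-janw-cc-2-0} again forces $p_1<0$, so $\max_\rho Q<0$ reduces to $4p_1 r_1 > q_1^2$; invoking $y^2<1-x^2$ converts this into the nonnegativity of a quadratic $R(x)=mx^2+nx+r$ on $(-1,1)$, where now the factor $(1+B)/(1-B)$ from Theorem \ref{thm:-janw-cc} is everywhere replaced by $16B(1-B)/(1+B)^3$. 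A short clearing of denominators identifies $|n|\geq 2|m|$ with \eqref{eqn:thm-2-janw-cc-2} and $|n|<2m$ with \eqref{eqn:thm-2-janw-cc-4}; in the first case $R(x)\geq m+r-|n|\geq 0$ is exactly \eqref{eqn:thm-2-janw-cc-1}, and in the second $R(x)\geq(4mr-n^2)/(4m)\geq 0$ is exactly \eqref{eqn:thm-2-janw-cc-3}. Lemma \ref{lem:miller-mocanu-1} then gives $\RM p(z)>0$, equivalently $u_p(z)\prec(1+Az)/(1+Bz)$, as required.

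The main obstacle is the minimization step: one must recognize that the naive bound $g(0)$ is no longer optimal for $B\geq 3-2\sqrt2$ and instead locate the interior minimizer $t^\ast$, verifying that it is admissible precisely at the threshold $B=3-2\sqrt2$ that separates the two theorems. Everything downstream is bookkeeping, but it is essential that the substituted constant $8B(1-B)/(1+B)^3$ propagate correctly through $r_1$, through the coefficient $n$, where it produces the term $4B(1-B^2)(1+A)c/\big((1+B)^3(A-B)\big)$, and through $r$, where it produces $16(\kappa-1)B(1-B)/(1+B)^3$, so as to match the hypotheses \eqref{eqn:thm-2-janw-cc-1}--\eqref{eqn:thm-2-janw-cc-4} verbatim.
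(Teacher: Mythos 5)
Your proposal is correct and follows essentially the same route as the paper's own proof: the same substitution $p(z)$, the same $\Psi$ and $\Omega=\{0\}$, the replacement of the bound $(1+B)/(2(1-B))$ by $8B(1-B)/(1+B)^3$ in the regime $B\geq 3-2\sqrt{2}$, and the same two-case quadratic analysis of $R_1(x)$ matching \eqref{eqn:thm-2-janw-cc-1}--\eqref{eqn:thm-2-janw-cc-4}. In fact your explicit minimization of $g(t)$ at $t^{\ast}=1-2(1-B)^2/(1+B)^2$ supplies a derivation of the constant $8B(1-B)/(1+B)^3$ that the paper merely asserts, and your endpoint argument for the case $|n_1|\geq 2m_1$ is cleaner than the paper's (mis)statement that ``$R_1$ is increasing.''
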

\begin{proof}
 First, proceed
 similar to the proof of Theorem $\ref{thm:-janw-cc}$ and derive the expression of $\RM \Psi( i\rho, \sigma, \mu+ i \nu; z)$ as given in $(\ref{eqn:thm-1-re-psi})$ . Now
for $\sigma \leq -(1+\rho^2)/2$, $\rho \in \mathbb{R}$, and $B \geq 3- 2 \sqrt{2}$,
\begin{align*}
\tfrac{ 2( 1-B^2)}{(1-B)^2+ (1+B)^2 \rho^2} \sigma^2
 &\geq \tfrac{ 2( 1-B^2)}{(1-B)^2+ (1+B)^2 \rho^2} \tfrac{(1+\rho^2)^2}{4}
\geq \dfrac{8 B (1-B)}{(1+B)^3},
\end{align*}
and then with $z = x+iy \in \mathbb{D}$, and $\mu+\sigma<0$, it follows that
\begin{align*}
&\RM\Psi( i\rho, \sigma, \mu+ i \nu; z)\\
&\leq -\tfrac{1}{2}(\kappa-1) (1+\rho^2) - \tfrac{(1+B) (1+A)\rho^2}{8(A-B)} cx
- \tfrac{\rho (1-AB)}{4(A-B)} cy\\
& \quad \quad + \tfrac{(1-B) (1-A)}{8(A-B)} cx -  \tfrac{8 B (1-B)}{(1+B)^3}\\
&= p_2 \rho^2 + q_2 \rho + r_2: = Q_1(\rho),
\end{align*}
where
\begin{align*}
p_2&= -\tfrac{1}{2}(\kappa-1)-\tfrac{(1+B)(1+A)}{8(A-B)}cx, \hspace{2.5in} \\
q_2&=-\tfrac{(1-AB)cy}{4(A-B)},\\
r_2 &= -\tfrac{1}{2}(\kappa-1)+\tfrac{(1-B)(1-A)}{8(A-B)}cx-\tfrac{8B(1-B)}{(1+B)^3}.
\end{align*}

Observe that the inequality
$(\ref{eqn:thm-janw-cc-2-0})$ implies that $p_2 <0$. Thus $Q_1(\rho) <0$ for all $\rho \in \mathbb{R}$ provided $q_{2}^{2} \leq 4p_{2}r_2$, that is, for $|x|, |y|<1$,
\begin{align*}
&\tfrac{(1-AB)^2}{16(A-B)^2} c^2y^2 \\
&\leq   \left( (\kappa-1)+\tfrac{(1+B) (1+A)}{4(A-B)} cx\right) \left((\kappa-1)-\tfrac{(1-B) (1-A)}{4(A-B)} cx + \tfrac{16 B (1-B)}{(1+B)^3}\right).
 \end{align*}
With $y^2 < 1- x^2$, it is enough to show for $|x|<1$,
 \begin{align*}
&\tfrac{(1-AB)^2}{16(A-B)^2} c^2(1-x^2)\\
&\leq   \left( (\kappa-1)+\tfrac{(1+B) (1+A)}{4(A-B)} cx\right) \left((\kappa-1)-\tfrac{(1-B) (1-A)}{4(A-B)} cx + \tfrac{16 B (1-B)}{(1+B)^3}\right),
 \end{align*}
which is equivalent to
\begin{align}\label{eqn:thm-2-x}
R_1(x) := m_1 x^2 + n_1 x+ r_1 \geq 0,
\end{align}
where
\begin{align*}
m_1  &:=\tfrac{c^2}{16},\\
n_1 &:= \left( (\kappa-1)\left( \tfrac{c (A+B)}{2(A-B)} \right) +
 \tfrac{4 B (1-B^2)(1+A)}{(A-B)(1+B)^3}c\right),\\
r_1 &:= (c-1)^2 + (c-1) \tfrac{16 B (1-B)}{(1+B)^3}- \tfrac{a^2(1-AB)^2}{(A-B)^2}.
\end{align*}

If $(\ref{eqn:thm-2-janw-cc-2})$ holds, then $|n_1| \geq 2|m_1|$. Since $R_1$ is increasing, then $R_1(x) \geq m_1 + r_1 - |n_1|$,  which is nonnegative from $(\ref{eqn:thm-2-janw-cc-1})$. On the other hand, if
$(\ref{eqn:thm-2-janw-cc-4})$ holds, then $|n_1| < 2|m_1|$,  $R_1(x) \geq  (4 m_1 r_1 - n_1^2)/ 4 m_1$,  and
$(\ref{eqn:thm-2-janw-cc-3})$ implies $R_1(x) \geq 0$. Either case establishes $(\ref{eqn:thm-2-x})$.
\end{proof}

\begin{theorem}\label{thm:-janw-cc-3}
Let $-1 \leq B \leq 3- 2 \sqrt{2} \approx 0.171573$. Suppose
 $B < A \leq 1$, $c, \kappa \in \mathbb{R}$ with $c \neq 0$ and satisfying
 \begin{align*}
\kappa \geq \max \left\{0,\tfrac{(1+B)(1+A)}{4(A-B)} \left|c\right|\right\}.
\end{align*}
Further let $A$, $B$, $\kappa$ and $c$ satisfy either
 \begin{align*}
&\kappa^2+\kappa\tfrac{1+B}{1-B}- \left| \kappa\tfrac{(A+B)}{2(A-B)}c + \tfrac{(1+B)^2(1+A)}{4(1-B)(A-B)}c \right|\geq  \tfrac{(1-A^2) (1-B^2)}{16(A-B)^2} c^2
\end{align*}
whenever
 \begin{align*}
\left| 2 \kappa(1-B)(A+B)c + (1+B)^2 (1+A) c) \right| \geq \tfrac{1}{2}(A-B)(1-B)c^2,
\end{align*}
or the inequality
 \begin{align*}
\left(\kappa\tfrac{(A+B)}{2(A-B)}+\tfrac{(1+B)^2(1+A)}{4(1-B)(A-B)}c\right)^2
 \leq \tfrac{c^2}{4}\left(\kappa^2+\tfrac{\kappa(1+B)}{1-B}-\tfrac{(1-AB)^2}{16(A-B)^2}c^2\right)
\end{align*}
when
 \begin{align*}
\left| 2 \kappa(1-B)(A+B)c+(1+B)^2(1+A)c \right| <\tfrac{1}{2}(A-B)(1-B)c^2.
\end{align*}
If $(1+B)u_p(z) \neq (1+A)$,  then
$(-4\kappa/c)u'_p(z) \in \mathcal{P}[A,B]$.
\end{theorem}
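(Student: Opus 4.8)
The plan is to exploit the recurrence relation $(\ref{eqn:kumar-hypr-recur-1})$, namely $4\kappa u'_p(z) = -c\, u_{p+1}(z)$, to reduce the statement about $(-4\kappa/c)u'_p(z)$ to a statement about the Bessel function $u_{p+1}$ itself. Indeed, rewriting the recurrence gives
\begin{align*}
\left(-\tfrac{4\kappa}{c}\right) u'_p(z) = u_{p+1}(z),
\end{align*}
so the claim $(-4\kappa/c)u'_p(z) \in \mathcal{P}[A,B]$ is precisely the assertion that $u_{p+1}(z) \in \mathcal{P}[A,B]$, i.e. $u_{p+1}(z) \prec (1+Az)/(1+Bz)$. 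Thus the entire theorem should follow from applying Theorem $\ref{thm:-janw-cc}$ to the shifted function $u_{p+1}$.

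The key observation driving the shift is the effect on the parameter $\kappa$. The function $u_{p+1}$ corresponds to replacing $p$ by $p+1$, hence replacing $\kappa = p + (b+1)/2$ by $\kappa + 1$. Therefore, everywhere Theorem $\ref{thm:-janw-cc}$ refers to $\kappa$, one applies it with $\kappa$ replaced by $\kappa+1$. The crucial bookkeeping point is that the hypotheses of Theorem $\ref{thm:-janw-cc}$ are phrased in terms of the quantity $\kappa - 1$; under the substitution $\kappa \mapsto \kappa + 1$, every occurrence of $\kappa - 1$ becomes $(\kappa+1) - 1 = \kappa$. First I would verify that each of the four displayed hypotheses of the present theorem is obtained verbatim from $(\ref{eqn:thm-janw-cc-0})$--$(\ref{eqn:thm-janw-cc-4})$ by this single replacement of $\kappa - 1$ with $\kappa$: the constraint on $\max\{0, \ldots\}$, the two main inequalities, and their two governing conditions all match after this substitution.

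The proof then proceeds as follows. I would first check that $c$, $\kappa + 1$, $A$, $B$ satisfy the hypotheses of Theorem $\ref{thm:-janw-cc}$, which is immediate from the stated hypotheses once the identification $\kappa - 1 \leftrightarrow \kappa$ is made explicit. Since $c \neq 0$ guarantees $u_{p+1}$ is well-defined and the recurrence is applicable (dividing by $c$ is legitimate), and the non-degeneracy condition $(1+B)u_p(z) \neq (1+A)$ carries over to the construction, Theorem $\ref{thm:-janw-cc}$ yields $u_{p+1}(z) \in \mathcal{P}[A,B]$. Substituting the recurrence identity then gives $(-4\kappa/c)u'_p(z) = u_{p+1}(z) \in \mathcal{P}[A,B]$, completing the argument.

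The main obstacle I anticipate is purely bookkeeping rather than conceptual: one must confirm that the parameter $\kappa + 1$ remains admissible, i.e. that $\kappa + 1 = (p+1) + (b+1)/2 \neq 0, -1, -2, \ldots$, so that $u_{p+1}$ is a genuine generalized Bessel function to which Theorem $\ref{thm:-janw-cc}$ applies. The hypothesis $\kappa \geq \max\{0, \ldots\} \geq 0$ ensures $\kappa + 1 \geq 1 > 0$, so this admissibility is automatic and no degenerate case arises. The only other care needed is in the non-degeneracy clause: one should check that $(1+B)u_p(z) \neq (1+A)$ suffices for the construction of the auxiliary function in the invoked theorem as applied to $u_{p+1}$; if the intended hypothesis were instead $(1+B)u_{p+1}(z) \neq (1+A)$, this would require a brief remark, but otherwise the reduction is direct and the proof is short.
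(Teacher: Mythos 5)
Your proposal is correct and is exactly the argument the paper intends: the paper states Theorem \ref{thm:-janw-cc-3} without an explicit proof, but its hypotheses are precisely those of Theorem \ref{thm:-janw-cc} with $\kappa-1$ replaced by $\kappa$, i.e.\ the shift $\kappa \mapsto \kappa+1$ induced by the recurrence $4\kappa u_p'(z) = -c\,u_{p+1}(z)$, which is the same device the paper uses explicitly to pass from Theorem \ref{thm:jan-convex} to Theorem \ref{thm:jan-starlike}. Your closing observation is also apt: the non-degeneracy hypothesis should properly read $(1+B)u_{p+1}(z)\neq(1+A)$, equivalently $(1+B)(-4\kappa/c)u_p'(z)\neq 1+A$, and the paper's statement with $u_p$ appears to be a typographical slip rather than a defect in your reduction.
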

\begin{theorem}\label{thm:-janw-cc-4}
Let  $ 3- 2 \sqrt{2} < B  < A \leq 1$. Suppose $c, \kappa \in \mathbb{R}$, $a \neq 0$, such that
 \begin{align*}
 \kappa \geq \max \left\{0,\tfrac{(1+B)(1+A)}{4(A-B)} \left|c\right|\right\}.
\end{align*}
Suppose $A$, $B$, $\kappa$ and $c$ satisfy either
 \begin{align*}
\kappa^2+16\kappa\tfrac{B(1-B)}{(1+B)^3}- \left| \kappa\tfrac{(A+B)}{2(A-B)}c + \tfrac{4B(1-B^2)(1+A)}{(1+B)^3(A-B)}c \right|   \geq  \tfrac{(1-A^2) (1-B^2)}{16(A-B)^2} c^2
\end{align*}
whenever
 \begin{align*}
\left|  \kappa(1+B)^3(A+B)c + 8B(1-B^2) (1+A) c) \right| \geq \tfrac{c^2}{4}(A-B)(1+B)^3,
\end{align*}
or the inequality
 \begin{align*}
\left(\kappa\tfrac{(A+B)}{2(A-B)}c+\tfrac{4B(1-B^2)(1+A)}{(1+B)^3(A-B)}\right)^2
 \leq \tfrac{c^2}{4}\left(\kappa^2+\tfrac{16\kappa(B(1-B)}{(1+B)^3}-\tfrac{(1-AB)^2}{16(A-B)^2}c^2\right)
\end{align*}
when
 \begin{align*}
\left| 2 \kappa(1+B)^3(A+B)c+8B(1-B^2)(1+A)c \right| <\tfrac{c^2}{4}(A-B)(1+B)^3.
\end{align*}
If $(1+B)u_p(z) \neq (1+A)$,  then
$(-4\kappa/c)u'_p(z) \in \mathcal{P}[A,B]$.
\end{theorem}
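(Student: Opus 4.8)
The plan is to avoid re-running the Miller--Mocanu machinery and instead reduce the statement to the already-established Theorem \ref{thm:janw-cc-2} by means of the recurrence relation. The starting point is the observation that \eqref{eqn:kumar-hypr-recur-1}, namely $4\kappa\,u_p'(z) = -c\,u_{p+1}(z)$, can be rewritten (using $c \neq 0$) as
\[
-\tfrac{4\kappa}{c}\,u_p'(z) = u_{p+1}(z).
\]
Thus the function whose membership in $\mathcal{P}[A,B]$ is asserted is precisely the next generalized Bessel function $u_{p+1}$. Since $u_{p+1}$ is given by the same power series \eqref{eqn:conf-hyp} with $p$ replaced by $p+1$, it is again a normalized analytic function with $u_{p+1}(0)=1$, and its order parameter is $\kappa_{p+1} = (p+1)+(b+1)/2 = \kappa+1$. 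Consequently $u_{p+1}$ solves the Bessel-type equation \eqref{eqn:kumar-hypr-ode} with $\kappa$ replaced by $\kappa+1$, and the hypothesis $\kappa \geq 0$ ensures $\kappa+1 \notin \{0,-1,-2,\dots\}$, so $u_{p+1}$ is well defined.

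The second step is simply to invoke Theorem \ref{thm:janw-cc-2} for the function $u_{p+1}$, i.e. with $\kappa$ everywhere replaced by $\kappa+1$. Under this replacement every occurrence of $\kappa-1$ in the hypotheses \eqref{eqn:thm-janw-cc-2-0} and \eqref{eqn:thm-2-janw-cc-1}--\eqref{eqn:thm-2-janw-cc-4} of that theorem becomes $(\kappa+1)-1=\kappa$, while $A$, $B$, $c$ and all the $B$-dependent coefficients are untouched, and the admissible range $3-2\sqrt{2} < B < A \le 1$ is unchanged. A term-by-term comparison then shows that the displayed inequalities of the present theorem are exactly \eqref{eqn:thm-janw-cc-2-0} and \eqref{eqn:thm-2-janw-cc-1}--\eqref{eqn:thm-2-janw-cc-4} after this shift. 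Hence the hypotheses of Theorem \ref{thm:janw-cc-2} hold for $u_{p+1}$, and that theorem yields $u_{p+1}(z) \in \mathcal{P}[A,B]$, which is precisely the desired conclusion $-\tfrac{4\kappa}{c}\,u_p'(z) \in \mathcal{P}[A,B]$.

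The only genuine care required is bookkeeping rather than analysis. The nondegeneracy clause should be read for the shifted function, i.e. as $(1+B)u_{p+1}(z) \neq (1+A)$, since that is exactly the condition guaranteeing a nonvanishing denominator for the auxiliary function $p(z)$ constructed from $u_{p+1}$ in the proof of Theorem \ref{thm:janw-cc-2}. One should also confirm that the two ``whenever''/``when'' trigger inequalities correspond under the shift $\kappa-1 \mapsto \kappa$; these are complementary conditions sharing a common left-hand side, so matching one fixes the other. No further estimation is needed: all the substantive work—the lower bound for $2(1-B^2)\sigma^2/\left((1-B)^2+(1+B)^2\rho^2\right)$ in the regime $B \ge 3-2\sqrt{2}$ and the sign analysis of the resulting quadratic $R_1(x)$—is already carried out in Theorem \ref{thm:janw-cc-2} and is inherited verbatim. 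Thus the main obstacle is purely notational: tracking the index shift carefully so that the reduction is airtight.
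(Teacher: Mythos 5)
Your proof is correct and is exactly the paper's intended argument: the paper states this theorem without a written proof precisely because it follows from Theorem \ref{thm:janw-cc-2} applied to $u_{p+1}=(-4\kappa/c)\,u_p'$ via the recurrence $(\ref{eqn:kumar-hypr-recur-1})$, whose order parameter is $\kappa+1$, so that every $\kappa-1$ in that theorem's hypotheses becomes $\kappa$. Your bookkeeping remarks are also apt --- the nondegeneracy clause should indeed be read for $u_{p+1}$, and the stray factor of $2$ in the paper's final trigger inequality (absent from the shifted $(\ref{eqn:thm-2-janw-cc-4})$) is evidently a typo in the statement, not a flaw in your reduction.
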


\begin{corollary}
Let $c \leq -1$, and
 \[\kappa \geq  \max \left\{\tfrac{c(c+1)}{2}, \tfrac{c}{2(c+1)}\right\}.\]
Then $(-4\kappa/c)(u_p(z)-1)$ is close-to-convex of order $(c+1)/c$ with respect to the identity function.
\end{corollary}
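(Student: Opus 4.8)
The plan is to realize $f(z) := (-4\kappa/c)(u_p(z)-1)$ as a close-to-convex function having the identity map as its accompanying starlike function. Since the identity belongs to $\mathcal{S}^\ast$ and satisfies $zf'(z)/z = f'(z)$, while $f(0)=0$ and $f'(0) = (-4\kappa/c)u_p'(0) = 1$ (using $u_p'(0)=-c/(4\kappa)$, which follows from the recurrence $(\ref{eqn:kumar-hypr-recur-1})$ or directly from $(\ref{eqn:conf-hyp})$), it suffices to establish $\RM f'(z) = \RM\left((-4\kappa/c)u_p'(z)\right) > (c+1)/c$ on $\mathbb{D}$. I would obtain this by placing $(-4\kappa/c)u_p'(z)$ in a Janowski class $\mathcal{P}[A,B]$ whose members have real part exceeding $(c+1)/c$.

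Because $\mathcal{P}[A,-1]$ is precisely the class of functions with $\RM(\cdot) > (1-A)/2$, I would take $B=-1$ and solve $(1-A)/2 = (c+1)/c$, which forces $A = -(c+2)/c$. For $c \le -1$ one checks $-1 = B < A \le 1$ together with $B \le 3-2\sqrt 2$, so Theorem $\ref{thm:-janw-cc-3}$ is the applicable instrument, and its conclusion $(-4\kappa/c)u_p'(z)\in\mathcal{P}[A,B]$ is exactly the target.

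The substance of the argument is then the verification of the hypotheses of Theorem $\ref{thm:-janw-cc-3}$ for this pair $(A,B)$. The decisive feature is that $B=-1$ makes $1+B = 0$, so every term carrying a factor $(1+B)$ or $(1-B^2)$ disappears. With $A-B = -2/c$ and $A+B = -2(c+1)/c$, the threshold condition on $\kappa$ collapses to $\kappa \ge 0$; the expression $\tfrac{A+B}{2(A-B)}c$ reduces to $\tfrac{c(c+1)}{2}$; the quantity $\tfrac{(1-A^2)(1-B^2)}{16(A-B)^2}c^2$ vanishes; and the main inequality reduces to $\kappa^2 \ge \kappa\,\tfrac{c(c+1)}{2}$, i.e. $\kappa \ge c(c+1)/2$. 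The governing ``whenever'' alternative likewise simplifies to $4\kappa|c+1| \ge |c|$, that is $\kappa \ge c/(4(c+1))$. Reading the hypothesis $\kappa \ge \max\{c(c+1)/2,\ c/(2(c+1))\}$, both requirements are met (note $c/(2(c+1)) \ge c/(4(c+1))$ since $c/(c+1)>0$ for $c<-1$), so we land in the first branch of the theorem; the side condition $(1+B)u_p(z)\neq(1+A)$ is automatic, as $1+B=0 \neq -2/c = 1+A$.

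With Theorem $\ref{thm:-janw-cc-3}$ yielding $(-4\kappa/c)u_p'(z) \in \mathcal{P}[A,-1]$, hence $\RM\left((-4\kappa/c)u_p'(z)\right) > (1-A)/2 = (c+1)/c$, the close-to-convexity of $f$ of order $(c+1)/c$ with respect to the identity follows at once. The main obstacle I anticipate is purely organizational: correctly bookkeeping the numerous cancellations triggered by $B=-1$ and confirming that the stated hypothesis on $\kappa$ is calibrated to land in the first rather than the second branch of Theorem $\ref{thm:-janw-cc-3}$; once those reductions are in hand, no further estimation is needed.
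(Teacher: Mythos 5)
Your proof is correct and is exactly the argument the paper intends: the corollary is stated (without proof) as the specialization of Theorem~\ref{thm:-janw-cc-3} to $B=-1$, $A=-(c+2)/c$, and your reductions — the threshold collapsing to $\kappa\ge 0$, the first ``whenever'' alternative becoming $\kappa\ge c/(4(c+1))$ (guaranteed by $\kappa\ge c/(2(c+1))$), and the main inequality becoming $\kappa\ge c(c+1)/2$ — are precisely the calibration behind the stated hypothesis, after which $\RM\bigl((-4\kappa/c)u_p'(z)\bigr)>(1-A)/2=(c+1)/c$ gives close-to-convexity relative to the identity. The only caveat, which is a defect of the paper's own statement rather than of your argument, is the endpoint $c=-1$, where $c/(2(c+1))$ is undefined and the first branch fails, so the result should be read for $c<-1$.
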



\begin{corollary}
Let $c$  be a nonzero real number, and
$\kappa \geq |c|/2$. Then \[ \RM (-4\kappa/c) u'_p(z) > 1/2.\]
\end{corollary}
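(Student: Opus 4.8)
The plan is to specialize Theorem~\ref{thm:-janw-cc-3} to the parameters $A = 0$ and $B = -1$, in direct analogy with the way Corollary~\ref{cor:-janw-cc-3} specialized Theorem~\ref{thm:-janw-cc}. The motivating observation is that $\mathcal{P}[0,-1]$ is precisely the class of normalized functions subordinate to $1/(1-z)$; since $1/(1-z)$ maps $\mathbb{D}$ conformally onto the half-plane $\{w : \RM w > 1/2\}$, membership $(-4\kappa/c)u'_p(z) \in \mathcal{P}[0,-1]$ is equivalent to the assertion $\RM (-4\kappa/c)u'_p(z) > 1/2$. (The recurrence $(\ref{eqn:kumar-hypr-recur-1})$ gives $(-4\kappa/c)u'_p(z) = u_{p+1}(z)$, so this candidate is indeed normalized to $1$ at the origin.) Thus the whole task reduces to checking that the hypotheses of Theorem~\ref{thm:-janw-cc-3} are met for this choice of $A$ and $B$.

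First I would dispose of the preliminary bound: with $B = -1$ the factor $(1+B)$ vanishes, so $\tfrac{(1+B)(1+A)}{4(A-B)} = 0$ and the requirement $\kappa \geq \max\{0, \tfrac{(1+B)(1+A)}{4(A-B)}|c|\}$ reduces to $\kappa \geq 0$, which is clear from $\kappa \geq |c|/2 \geq 0$. Next I would determine the active branch. Substituting $A = 0$, $B = -1$ into the threshold expression yields
\[
\left| 2\kappa(1-B)(A+B)c + (1+B)^2(1+A)c \right| = |-4\kappa c| = 4\kappa|c|,
\]
while $\tfrac{1}{2}(A-B)(1-B)c^2 = c^2$. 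Because $\kappa \geq |c|/2$ forces $4\kappa|c| \geq 2c^2 \geq c^2$, the situation falls under the first branch, so only the first displayed inequality of the theorem needs verification.

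For that inequality the same substitution collapses almost everything: $\tfrac{1+B}{1-B} = 0$, the bracketed modulus becomes $|-\tfrac{1}{2}\kappa c| = \tfrac{1}{2}\kappa|c|$, and the right-hand side $\tfrac{(1-A^2)(1-B^2)}{16(A-B)^2}c^2$ vanishes since $1-B^2 = 0$. The inequality therefore reduces to
\[
\kappa^2 - \tfrac{1}{2}\kappa|c| = \kappa\!\left(\kappa - \tfrac{|c|}{2}\right) \geq 0,
\]
which is immediate from $\kappa \geq |c|/2 \geq 0$. The nondegeneracy hypothesis $(1+B)u_p(z) \neq (1+A)$ reads $0 \neq 1$ and holds trivially. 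I do not anticipate any genuine obstacle here; the only points demanding care are confirming that $A=0$, $B=-1$ places us in the first branch and that the threshold comparison $4\kappa|c| \geq c^2$ runs in the right direction, both of which are transparent consequences of $\kappa \geq |c|/2$.
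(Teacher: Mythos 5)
Your proposal is correct and matches the intended derivation: the paper states this corollary without an explicit proof, but it is clearly meant to follow from Theorem~\ref{thm:-janw-cc-3} by the specialization $A=0$, $B=-1$, exactly mirroring how Corollary~\ref{cor:-janw-cc-3} is deduced from Theorem~\ref{thm:-janw-cc}. Your verification that the hypotheses collapse to $\kappa \geq 0$, that the first branch is active because $4\kappa|c| \geq 2c^2 \geq c^2$, and that the remaining inequality reduces to $\kappa(\kappa - |c|/2) \geq 0$ is accurate and complete.
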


\section{Janowski starlikeness of generalized Bessel functions}\label{section-4}

This section contributes to find conditions to ensure a normalized and generalized Bessel functions $z u_p(z)$ in the class of
Janowski starlike functions. For this purpose, first  sufficient conditions for $u_p(z)$ to be Janowski convex is determined, and then an application of relation $( \ref{eqn:kumar-hypr-recur-1})$ yields conditions for $z u_p(z) \in \mathcal{S}^*[A, B].$

 \begin{theorem}\label{thm:jan-convex}
Let $c, \kappa \in \mathbb{R}$ be such that $(A-B)u'_p(z) \neq (1+B) z u''_p(z)$, $ -1 \leq B \leq 0 < A \leq 1$.
 Suppose
 \begin{align} \label{eqn:thm-jan-conv-1}
 \kappa(1+B)\geq   \tfrac{(1+B)^2}{4(A-B)}\left|c\right|-(1+A-B).
 \end{align}
Further let $A$, $B$, $\kappa$ and $c$ satisfy
\begin{align}\label{eqn:thm-jan-conv-2}
&(1+A-B+\kappa(1+B))(1-A+B+\kappa(1-B))\\
&\quad \quad \geq \tfrac{(1-B^2)^2}{16(A-B)^2}c^2- \left|\tfrac{B-(A-B)(1+B^2)+(1-B^2)B\kappa}{2(A-B)}c\right|
\end{align}

If $0 \notin u'_p(\mathbb{D})$, $0 \notin u''_p(\mathbb{D})$, then
\[1 + \tfrac{z u''_p(z)}{u'_p(z)} \prec \tfrac{1+Az}{1+Bz}.\]
 \end{theorem}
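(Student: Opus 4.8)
The plan is to adapt the admissibility argument of Theorem~\ref{thm:-janw-cc} to the logarithmic-derivative quantity $q(z):=1+zu_p''(z)/u_p'(z)$. First I set
\[
p(z)=-\frac{(1-A)-(1-B)q(z)}{(1+A)-(1+B)q(z)},\qquad\text{equivalently}\qquad q(z)=\frac{(1-A)+(1+A)p(z)}{(1-B)+(1+B)p(z)},
\]
so that the desired subordination $q(z)\prec(1+Az)/(1+Bz)$ is equivalent to $\RM p(z)>0$. The hypotheses $0\notin u_p'(\mathbb{D})$ and $0\notin u_p''(\mathbb{D})$ guarantee that $q$, and hence $p$, is analytic; since $q(0)=1$ one checks $p(0)=1$, and the exclusion $(A-B)u_p'(z)\neq(1+B)zu_p''(z)$ is exactly the condition $(1+B)q(z)\neq(1+A)$ that keeps the denominator of $p$ from vanishing.

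The next step is to produce a differential equation for $q$. Rather than differentiating $(\ref{eqn:kumar-hypr-ode})$ directly, I would exploit the recurrence $(\ref{eqn:kumar-hypr-recur-1})$: since $4\kappa u_p'=-c\,u_{p+1}$, the function $v:=u_p'$ solves the Bessel equation $(\ref{eqn:kumar-hypr-ode})$ with $\kappa$ replaced by $\kappa+1$, i.e. $4z^2v''+4(\kappa+1)zv'+czv=0$. Writing $\phi=v'/v$ and $q=1+z\phi$, dividing by $4zv$ and eliminating $\phi,\phi'$ in favour of $q,q'$ yields the first-order (Riccati-type) relation
\[
zq'(z)+(q(z)-1)^2+\kappa\,(q(z)-1)+\tfrac{c}{4}z=0.
\]
Substituting $q-1=(A-B)(p-1)/D$ and $q'=2(A-B)p'/D^2$ with $D=(1-B)+(1+B)p$, then clearing $D^2$, I read off the admissibility function (with $\Omega=\{0\}$)
\[
\Psi(r,s;z)=s+\tfrac{(A-B)}{2}(r-1)^2+\tfrac{\kappa}{2}(r-1)\bigl((1-B)+(1+B)r\bigr)+\tfrac{c\,z}{8(A-B)}\bigl((1-B)+(1+B)r\bigr)^2,
\]
so that $\Psi(p(z),zp'(z);z)=0$. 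Because the ODE is first order, this is a $\mathbb{C}^2$-problem, and the relevant constraint in Lemma~\ref{lem:miller-mocanu-1} is simply $\sigma\le-(1+\rho^2)/2$.

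To invoke the lemma I must show $\RM\Psi(i\rho,\sigma;z)<0$ for every real $\rho$, every $\sigma\le-(1+\rho^2)/2$, and every $z=x+iy\in\mathbb{D}$. Computing the real part and using the bound on $\sigma$ majorises $\RM\Psi$ by a quadratic $P\rho^2+S\rho+R$ with coefficients linear in $x,y$; here $P=-\tfrac12(1+A-B+\kappa(1+B))-\tfrac{c(1+B)^2x}{8(A-B)}$, $S=-\tfrac{c(1-B^2)y}{4(A-B)}$, and $R=-\tfrac12(1-A+B+\kappa(1-B))+\tfrac{c(1-B)^2x}{8(A-B)}$. Hypothesis $(\ref{eqn:thm-jan-conv-1})$ together with $|x|<1$ gives $P<0$, so it remains to force $4PR>S^2$. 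Substituting $y^2<1-x^2$ and clearing, the decisive feature is that the $x^2$-terms cancel, so the requirement collapses to an inequality that is merely \emph{linear} in $x$; minimising this linear expression over $|x|<1$ replaces the $x$-dependence by an absolute-value term and yields precisely the product inequality $(\ref{eqn:thm-jan-conv-2})$ in $(1+A-B+\kappa(1+B))(1-A+B+\kappa(1-B))$. This cancellation is exactly why, unlike Theorems~\ref{thm:-janw-cc}--\ref{thm:-janw-cc-4}, no case distinction is needed.

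The main obstacle I anticipate is the bookkeeping in the two preceding steps: correctly clearing $D^2$ to obtain $\Psi$ and then extracting $\RM\Psi(i\rho,\sigma;z)$ as a quadratic in $\rho$ with the right coefficients, since a single sign slip there propagates straight into $(\ref{eqn:thm-jan-conv-2})$. The conceptually delicate point is verifying that the degree-two terms in $x$ annihilate, so that the problem is genuinely linear rather than quadratic; once that is seen, the remaining estimate is the elementary ``constant minus $|\text{slope}|\ge 0$'' bound, and a final application of Lemma~\ref{lem:miller-mocanu-1} gives $\RM p>0$, which by the defining M\"obius relation is equivalent to the claimed subordination.
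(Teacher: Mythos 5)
Your proposal is correct and is essentially the paper's own proof: your M\"obius transform is literally the same function $p$ the paper defines, your $\Psi$ is the paper's admissibility function written in factored form, and the closing estimate---negative leading coefficient via $(\ref{eqn:thm-jan-conv-1})$, the discriminant condition, the substitution $y^2<1-x^2$, the cancellation of the $x^2$ terms, and the linear-in-$x$ bound settled by $(\ref{eqn:thm-jan-conv-2})$---is step-for-step the argument in the paper. The only cosmetic difference is the route to the first-order ODE: you invoke the recurrence $(\ref{eqn:kumar-hypr-recur-1})$ and a Riccati substitution for $q=1+zu_p''/u_p'$, whereas the paper differentiates the Bessel equation and manipulates $zu_p'''(z)/u_p''(z)$ directly, both arriving at the identical equation $(\ref{eqn:thm-jan-conv-10})$.
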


\begin{proof}
Define an analytic function  $p : \mathbb{D} \to \mathbb{C}$ by
\[
p(z): = \tfrac{ (A-B) u'_p(z) + (1-B) z u''_p(z)}{(A-B) u'_p(z) - (1+B) z u''_p(z)}.
\]
Then
\begin{align}\label{eqn:thm-jan-conv-6}
 \tfrac{ z u''_p(z)}{ u'_p(z)}  =  \tfrac{(A-B) (p(z)-1)}{(p(z)+1)+ B (p(z)-1)},
 \end{align}
and
\begin{align}\label{eqn:thm-jan-conv-6-1}\nonumber
\tfrac{z^2 u'''_p(z)+zu''_p(z)}{ zu''_p(z)} - \tfrac{  zu''_p(z) }{ u'_p(z)}
&=  \tfrac{z p'(z)}{(p(z)-1)}-  \tfrac{(1+B) z p'(z)}{(p(z)+1)+ B (p(z)-1)}\\
&= \tfrac{z p'(z) \left( (p(z) +1)+ B (p(z)-1)- (1+B) (p(z)-1)\right)}{ (p(z)-1)((p(z)+1)+ B(p(z)-1))}.
\end{align}
A rearrangement of $(\ref{eqn:thm-jan-conv-6-1})$ yields
\[ \tfrac{ z u'''_p(z) }{ u''_p(z)} = \tfrac{2 z p'(z)}{ (p(z)-1) ((p(z)+1)+ B (p(z)-1))}-1+ \tfrac{  zu''_p(z) }{ u'_p(z)}. \]
Thus,
\begin{align}\label{eqn:thm-jan-conv-7}\nonumber
&\left(\tfrac{ z u'''_p (z)}{ u''_p(z)}\right)\; \left( \tfrac{ zu''_p(z) }{u'_p(z)} \right)\\
&= \tfrac{2(A-B)(p(z)-1)  z p'(z)}{ (p(z)-1) \big((p(z)+1)+ B (p(z)-1)\big)^2}
 - \tfrac{(A-B) (p(z)-1)}{(p(z)+1)+ B (p(z)-1)}+ \tfrac{(A-B)^2 (p(z)-1)^2}{\left((p(z)+1)+ B (p(z)-1)\right)^2}.
\end{align}
Now a differentiation of $(\ref{eqn:kumar-hypr-ode})$  leads to
\begin{align*}
4z^2u'''_p(z)+4(\kappa+1)zu''_p(z)+czu'_p(z) = 0,
\end{align*}
which give
\begin{align}\label{eqn:thm-jan-conv-9}
\left(\tfrac{ z u'''_p(z) }{ u''_p(z)}\right)\; \left( \tfrac{ zu''_p(z) }{u'_p(z)} \right)
+ (\kappa+1)\tfrac{ zu''_p(z) }{u'_p(z)}+\tfrac{c}{4}  z= 0.
\end{align}
Using $(\ref{eqn:thm-jan-conv-6})$ and $(\ref{eqn:thm-jan-conv-7})$,
 $(\ref{eqn:thm-jan-conv-9})$  yields
\begin{align*}
 \tfrac{2(A-B)z p'(z)}{ \big((p(z)+1)+ B (p(z)-1)\big)^2}  + \tfrac{(A-B)^2 (p(z)-1)^2}{\big((p(z)+1)+ B (p(z)-1)\big)^2} +  \tfrac{(A-B) (p(z)-1)\kappa}{(p(z)+1)+B (p(z)-1)}+\tfrac{c}{4}  z =0,
\end{align*}
equivalently
\begin{align}\label{eqn:thm-jan-conv-10} \nonumber
& z p'(z)+ \left( \tfrac{A-B}{2} + \tfrac{\kappa(1+B)}{2}  + \tfrac{ cz(1+B)^2 } { 8( A-B)} \right) (p(z))^2 - \left(  A- B + \kappa B - \tfrac{ c(1-B^2) } { 4( A-B)}z \right) p(z)\\
& + \left( \tfrac{A-B}{2} - \tfrac{\kappa(1-B)}{2}  + \tfrac{ cz(1-B)^2 } { 8( A-B)} \right)= 0.
\end{align}

Define,
\[\Psi( p(z), z p'(z), z) := z p'(z)+ F_{1} (p(z))^2 + F_{2}\; p(z) + F_{3},\]
where
\begin{align*}
F_1&=  \tfrac{(A-B)}{2} + \tfrac{\kappa(1+B)}{2}  + \tfrac{ cz(1+B)^2 } { 8( A-B)},\\
F_2& =  - (A-B) - \kappa B  + \tfrac{ c(1-B^2) } {4 ( A-B)}z, \\
F_3 &=  \tfrac{(A-B)}{2} - \tfrac{\kappa(1-B)}{2}  + \tfrac{ cz(1-B)^2 } { 8( A-B)}.
\end{align*}
 Thus, $(\ref{eqn:thm-jan-conv-10})$ yields $\Psi( p(z), z p'(z), z)  \in  \Omega =\{0\}$. Now with $ z= x+iy \in \mathbb{D}$, let
\begin{align*}
G_1 &:= \RM (F_1)=  \tfrac{A-B}{2} + \tfrac{\kappa(1+B)}{2}  + \tfrac{ cx (1+B)^2 } { 8( A-B)}\\
   & = \tfrac{1}{2} \left( A-B+ \kappa(1+B) + \tfrac{cx (1+B)^2}{4(A-B)}\right) ,\\
G_2 &:= \RM ( i F_2)  = -\tfrac{c(1-B^2)}{4(A-B)}y,  \\
G_3 &:= \RM (F_3)= \tfrac{A-B}{2} - \kappa\tfrac{1-B}{2}  + \tfrac{ cx(1-B)^2 } {8 ( A-B)}\\
 &=\tfrac{1}{2} \left( A-B- \kappa(1-B) + \tfrac{c (1-B)^2}{4(A-B)}x\right).
\end{align*}

For  $ \sigma \leq - (1+ \rho^2)/2$, $\rho \in \mathbb{R}$,
\begin{align*}
\RM  \Psi( i \rho, \sigma, z)
 &= \sigma - G_1 \rho^2 + G_2 \rho + G_3\\
 &\leq -\tfrac{1+2 G_1}{2}  \rho^2 +  G_2\; \rho +\tfrac{ 2 G_3 +1}{2} := Q (\rho).
\end{align*}
 Note that condition $(\ref{eqn:thm-jan-conv-1})$ implies  $(1+2 G_1)/{2} >0$. In this case, $Q$ has a maximum at $\rho=G_2/(1+2G_1)$. Thus $Q (\rho) <0$ for all real $\rho$ provided
\begin{align*}
G_2^2 \leq (1+ 2 G_1) (1- 2 G_3 ), \quad |x|, |y|<1.
\end{align*}
Since $y^2 < 1- x^2$, it is left to show that
\begin{align*}
&\tfrac{(1-B^2)^2}{16(A-B)^2}c^2 (1-x^2)\\
&\leq \left( 1+A-B+ \kappa(1+B) +\tfrac{c(1+B)^2}{4(A-B)}x\right)\left(1- A+B-\kappa(-1+B)\right.\\
&\quad \quad \quad \left.- \tfrac{c(1-B)^2}{4(A-B)}x \right),
\end{align*}
$|x|<1$. The above inequality is equivalent to
\begin{align}\label{eqn:thm-1-h}
H(x) : =  h_2(A,B) x + h_3(A,B)\geq 0,
\end{align}
where
\begin{align*}
h_2(A, B)  & = -\tfrac{(B-(A-B)(B^2+1)+(1-B^2)B\kappa)c}{2(A-B)},\\
h_3(A, B)  &=\left(1+A-B+\kappa (1+B)\right)\left(1-A+B-\kappa (B-1)\right)-\tfrac{(1-B^2)^2}{16(A-B)^2}c^2.
 \end{align*}

Since $|x|<1$,  the left-hand side of the inequality $(\ref{eqn:thm-1-h})$ satisfy 
\begin{align*}
h_2(A,B)x+h_3(A,B) \geq -|h_2(A,B)|+h_3(A,B). 
\end{align*}
Now it is evident from $(\ref{eqn:thm-jan-conv-2})$ that $H(x) \geq 0$ which establish the inequality 
$(\ref{eqn:thm-1-h})$.
 
 Thus $\Psi$ satisfies the hypothesis of Lemma $\ref{lem:miller-mocanu-1}$, and hence $\RM\; p(z) > 0$, or equivalently
 \begin{align*}
\tfrac{ (A-B) u'_p + (1-B) z u''_p}{(A-B) u'_p - (1+B) z u''_p} \prec \tfrac{1+z}{1-z}.
 \end{align*}
By definition of subordination, there exists an analytic self-map $w$ of $\mathbb{D}$ with $w(0)=0$ and
\begin{align*}
\tfrac{ (A-B) u'_p(z) + (1-B) z u''_p(z)}{(A-B) u'_p(z) - (1+B) z u''_p(z)} = \tfrac{1+w(z)}{1-w(z)}.
 \end{align*}
 A simple computation shows that
 \begin{align*}
1+ \tfrac{z u''_p(z)}{u'_p(z)} = \tfrac{1+Aw(z)}{1+Bw(z)},
 \end{align*}
and hence
 \[1+ \tfrac{z u''_p(z)}{u'_p(z)} \prec \tfrac{1+Az}{1+Bz}.\qedhere\]
\end{proof}
The relation  $(\ref{eqn:kumar-hypr-recur-1})$ also shows that
\begin{align*}
\tfrac{ z \; (z u_p(z)))'}{z u_p(z)} = 1+ \tfrac{ zu''_{p-1}(z)}{u'_{p-1}(z)}.
\end{align*}
Together with Theorem $\ref{thm:jan-convex}$, it immediately yields the following result for $z u_p(z)) \in \mathcal{S}^\ast[A,B]$.
\begin{theorem}\label{thm:jan-starlike}
Let $c$ and $\kappa$ be  real numbers such that $(A-B)u'_{p-1}(z) \neq (1+B) z u''_{p-1}(z)$, $ -1 \leq B < A\leq 1$.
 Suppose
 \begin{align}\label{eqn:thm-jan-star-1}
 \kappa(1+B)\geq  \tfrac{(1+B)^2}{4(A-B)}\left|c\right|-(A-2B).
 \end{align}
Further let $A$, $B$, $\kappa$ and $c$ satisfy 
 \begin{align*}
 (A-2B+\kappa(1+B))(2B-A+\kappa(1-B))\geq \tfrac{(1-B^2)^2}{16(A-B)}c^2+\left|\tfrac{B^3-(A-B)(1+B^2)+(1-B^2)B \kappa}{2(A-B)}c\right|
\end{align*}
Then $z u_p(z) \in \mathcal{S}^\ast[A,B]$.
\end{theorem}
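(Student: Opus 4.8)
The plan is to deduce this starlikeness result directly from the Janowski convexity result of Theorem~\ref{thm:jan-convex}, transported to the shifted order $p-1$ via the recurrence relation~(\ref{eqn:kumar-hypr-recur-1}). No fresh application of Lemma~\ref{lem:miller-mocanu-1} is needed; the entire content is a change of parameter together with the logarithmic-derivative identity displayed just before the statement.

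First I would record the key identity. Since $\kappa = p + (b+1)/2$, dropping the order from $p$ to $p-1$ replaces $\kappa$ by $\kappa-1$, so the recurrence~(\ref{eqn:kumar-hypr-recur-1}) at order $p-1$ reads $4(\kappa-1)u'_{p-1}(z) = -c\,u_p(z)$. Differentiating gives $4(\kappa-1)u''_{p-1}(z) = -c\,u'_p(z)$, whence $z u'_p(z)/u_p(z) = z u''_{p-1}(z)/u'_{p-1}(z)$. Combined with $(z u_p(z))' = u_p(z) + z u'_p(z)$ this yields
\[
\frac{z\,(z u_p(z))'}{z u_p(z)} = 1 + \frac{z u'_p(z)}{u_p(z)} = 1 + \frac{z u''_{p-1}(z)}{u'_{p-1}(z)},
\]
which is precisely the quantity subordinated in the conclusion of Theorem~\ref{thm:jan-convex} applied at order $p-1$.

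Next I would apply Theorem~\ref{thm:jan-convex} verbatim to $u_{p-1}$, that is, with $\kappa$ replaced by $\kappa-1$ throughout. The hypothesis $(A-B)u'_{p-1}(z) \neq (1+B) z u''_{p-1}(z)$ is exactly the non-degeneracy needed to form the auxiliary function $p(z)$ of that theorem, and the zero-free assumptions on $u'_{p-1}$ and $u''_{p-1}$ make the subordination chain well defined. The crux of the argument is the algebraic verification that the stated hypotheses of the present theorem are exactly those of Theorem~\ref{thm:jan-convex} after the substitution $\kappa \mapsto \kappa-1$. For the linear condition this is immediate: writing $(\kappa-1)(1+B) = \kappa(1+B)-(1+B)$ in~(\ref{eqn:thm-jan-conv-1}) turns the constant $-(1+A-B)$ into $-(A-2B)$, recovering~(\ref{eqn:thm-jan-star-1}). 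For the quadratic condition~(\ref{eqn:thm-jan-conv-2}) the two factors $1+A-B+\kappa(1+B)$ and $1-A+B+\kappa(1-B)$ become $A-2B+\kappa(1+B)$ and $2B-A+\kappa(1-B)$, while inside the modulus the numerator $B-(A-B)(1+B^2)+(1-B^2)B\kappa$ becomes $B^3-(A-B)(1+B^2)+(1-B^2)B\kappa$, since the extra term $-(1-B^2)B$ produced by the shift combines with the leading $B$ to give $B^3$. These are exactly the expressions appearing in the hypothesis of the present theorem.

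With the hypotheses matched, Theorem~\ref{thm:jan-convex} gives $1 + z u''_{p-1}(z)/u'_{p-1}(z) \prec (1+Az)/(1+Bz)$, and the identity above then reads $z\,(z u_p(z))'/(z u_p(z)) \prec (1+Az)/(1+Bz)$, i.e. $z u_p(z) \in \mathcal{S}^\ast[A,B]$. The only real obstacle is bookkeeping: one must keep the parameter shift $\kappa \mapsto \kappa-1$ consistent between the recurrence and the inequalities, and then check carefully that every shifted coefficient in~(\ref{eqn:thm-jan-conv-1}) and~(\ref{eqn:thm-jan-conv-2}) collapses to the corresponding coefficient in the present statement; there is no new analytic difficulty beyond Theorem~\ref{thm:jan-convex}.
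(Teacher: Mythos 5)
Your proposal is correct and is essentially the paper's own proof: the paper derives the identity $z\,(zu_p(z))'/(zu_p(z)) = 1 + zu''_{p-1}(z)/u'_{p-1}(z)$ from the recurrence (\ref{eqn:kumar-hypr-recur-1}) and then applies Theorem \ref{thm:jan-convex} with $\kappa$ replaced by $\kappa-1$, exactly as you do (you in fact supply more of the algebraic bookkeeping than the paper, whose ``proof'' is the two-line remark preceding the statement). One wrinkle you inherit from the paper: the modulus term in (\ref{eqn:thm-jan-conv-2}) is printed with a minus sign whereas the shifted hypothesis you verify (and the starlikeness statement) carries a plus; since the proof of Theorem \ref{thm:jan-convex} actually requires $h_3(A,B) \geq |h_2(A,B)|$, i.e.\ the plus version, and the plus version implies the printed minus version in any case, your matching of hypotheses is consistent with what that theorem genuinely needs.
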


\end{document}